\theoremstyle{definition}
\newtheorem{theorem}{Theorem}[section]
\newtheorem{corollary}[theorem]{Corollary}
\newtheorem{example}[theorem]{Example}
\newtheorem{question}[theorem]{Question}
\newtheorem{proposition}[theorem]{Proposition}
\newtheorem{lemma}[theorem]{Lemma}
\tikzstyle{vert} = 
\tikzstyle{small vert} = 
\tikzstyle{tiny vert} = 
\tikzstyle{rect vert} = 
\tikzstyle{b} = [draw, very thick, black,-]
\tikzstyle{d} = [draw, thick, black,-stealth]
\tikzstyle{a} = [draw, very thick, black,-stealth]
\definecolor{grey}{rgb}{.7, .7, .7}
\definecolor{orange}{RGB}{255,102,0}
\definecolor{ggreen}{RGB}{0,153,0}
\definecolor{darkblue}{RGB}{0,0,255}
\definecolor{purple}{RGB}{153,51,255}
\definecolor{turq}{RGB}{72,209,204}
\definecolor{gray}{RGB}{220,220,220}
\definecolor{orange2}{RGB}{255,100,0}
\definecolor{purple2}{RGB}{159,51,250}
\definecolor{rred}{rgb}{0.9, 0.17, 0.31}
\definecolor{naugreen}{cmyk}{.43,0,.34,.38}
\definecolor{naublue}{cmyk}{1,.72,0,.32}
\definecolor{mediterranean}{cmyk}{.67,0,.08,.3}
\definecolor{rose}{cmyk}{0,1.00,.20,0}
\definecolor{darkorchid}{cmyk}{.6,.9,0,.05}
\definecolor{butterfly}{cmyk}{.95,.59,0,.10}
\definecolor{springgreen}{cmyk}{1.00,0,.70,.02}
\definecolor{darkred}{cmyk}{0,1,1,.5}
\definecolor{nectarine}{cmyk}{0,0.70,1.00,0}
\definecolor{icyblue}{cmyk}{.84,.25,0,.06}
\definecolor{manatee}{rgb}{0.59, 0.6, 0.67}
\newcommand{\convexpath}[2]{
[
create hullnodes/.code={
\global\edef\namelist{#1}
\foreach [count=\counter] \nodename in \namelist {
\global\edef\numberofnodes{\counter}
\node at (\nodename) [draw=none,name=hullnode\counter] {};
}
\node at (hullnode\numberofnodes) [name=hullnode0,draw=none] {};
\pgfmathtruncatemacro\lastnumber{\numberofnodes+1}
\node at (hullnode1) [name=hullnode\lastnumber,draw=none] {};
},
create hullnodes
]
($(hullnode1)!#2!-90:(hullnode0)$)
\foreach [
evaluate=\currentnode as \previousnode using \currentnode-1,
evaluate=\currentnode as \nextnode using \currentnode+1
] \currentnode in {1,...,\numberofnodes} {
  let
\p1 = ($(hullnode\currentnode)!#2!-90:(hullnode\previousnode)$),
\p2 = ($(hullnode\currentnode)!#2!90:(hullnode\nextnode)$),
\p3 = ($(\p1) - (hullnode\currentnode)$),
\n1 = {atan2(\y3,\x3)},
\p4 = ($(\p2) - (hullnode\currentnode)$),
\n2 = {atan2(\y4,\x4)},
\n{delta} = {-Mod(\n1-\n2,360)}
  in 
{-- (\p1) arc[start angle=\n1, delta angle=\n{delta}, radius=#2] -- (\p2)}
}
-- cycle
}
\DeclareMathOperator{\Opt}{Opt}
\DeclareMathOperator{\Con}{Con}
\begin{document}

\global\long\def\RR{\sf{R}}%
\global\long\def\SS {\sf{S}}%
\global\long\def\v#1{\left[#1\right]}%
\global\long\def\bb#1{\left\llbracket#1\right \rrbracket}%
\global\long\def\set#1{\left\{#1\right \}}%
\global\long\def\l#1{\left(#1\right)}%
\global\long\def\tilda#1{\widetilde{#1}}%
\global\long\def\opt#1{\text{Opt}\left(#1\right)} % N: not a fan of this makro
\global\long\def\im#1{\text{im}\left(#1\right)}%
\global\long\def\G{\mathbb{G}}%
\global\long\def\H{\mathbb{H}}%

\newcommand{\note}[1]{{\color{olive}\small#1}}
\newcommand{\newold}[2]{{\bf new:}{\color{blue}#1}{\bf old:}{\color{red}#2}}%
\newcommand{\CC}{\mathsf{C}}
\newcommand{\DD}{\mathsf{D}}

\tolerance=1000

\title{Isomorphism Theorems for Impartial Combinatorial Games}

\author{Mikhail Baltushkin}
\author{Dana C.~Ernst}
\author{N\'andor Sieben}

\email{Mikhail.Baltushkin@nau.edu,Dana.Ernst@nau.edu,Nandor.Sieben@nau.edu}

\address{Northern Arizona University, Department of Mathematics and Statistics, Flagstaff, AZ 86011-5717, USA}
	
\begin{abstract}
We introduce the category of optiongraphs and option-preserving maps as a model to study impartial combinatorial games. Outcomes, remoteness, and extended nim-values are preserved under option-preserving maps. We show that the four isomorphism theorems from universal algebra are valid in this category. Quotient optiongraphs, including the minimum quotient, provide simplifications that can help in the analysis of games.
\end{abstract}
	
\keywords{impartial game; option-preserving map; congruence relation; minimum quotient; isomorphism theorems}
\subjclass[2020]{91A46, 91A43, 05C57, 08A30}

\maketitle
	
\section{Introduction}
	
	Impartial combinatorial games are commonly modeled using a digraph. The vertices are the positions and the arrows describe how to move between these positions. The out-neighbors of a position are the possible options or followers of the position.  During a play, two players take turns replacing the current position with one of its options by moving along the arrows. That is, a play is a directed walk in the digraph from a designated starting position. A play ends at positions with no options, which we refer to as terminal positions. The outcome of the play is determined at these terminal positions. There are two common play conventions. Under normal play, the last player to move is the winner. Under mis\`ere play, the last player to move is the loser. Infinite play may occur if the digraph has an infinite directed walk, possibly due to the presence of cycles or infinite paths. The outcome of any infinite play is declared a draw.  Fixing a specific starting position creates a game. There are three possible outcomes in such a game: either the first or the second player can force a win in each play, or both players can force a draw in each play. This determines an outcome function on the set of positions.
	
	The endgame of many combinatorial games decomposes as a sum of games, so game sums play a very important role in the theory. The most important tool for analysis of game sums under normal play is the nim-value, also referred to as Grundy-value. The nim-value determines the outcome of a game, and it allows for the easy computation of the nim-value of the sum. The original theory of nim-values  developed for well-founded digraphs without infinite play in~\cite{Sprague,Grundy} was extended by Smith to the theory of (extended) nim-values on finite digraphs in~\cite{SmithLoopy}. Smith's paper envisions that the development works for infinite digraphs. A fully general development for infinite digraphs can be found in~\cite{Li}.  Also see~\cite{siegel2005coping,SiegelThesis,SiegelBook} for results about games with cycles, under the name loopy games. The extended nim theory uses the remoteness values associated to the positions. Essentially, the remoteness measures how quickly the winner can win and how long the loser can delay losing. 
	
	In this paper, we introduce a category where the objects are digraphs and the morphisms are so-called option-preserving maps. In this category, we refer to each object as an optiongraph. Each option-preserving map preserves essential information related to the positions, including outcome, remoteness, and nim-value~\cite{Li}.   An optiongraph without any infinite plays is called a rulegraph. In~\cite{Basic2024}, the authors study the category of rulegraphs and option-preserving maps.  Their model enables the construction of quotient rulegraphs compatible with nim-value and formal birthday.  The smallest such quotient aligns with Conway's description of an impartial game in~\cite[Chapter 11]{ONAG}. Intermediate quotients allow us to discard irrelevant details while retaining essential information. Finding the right balance preserves intuitive understanding and facilitates game analysis. The authors also include analogs for the First and Fourth Isomorphism Theorems in universal algebra.
	
	One goal of this paper is to provide a categorical framework that encompasses all optiongraphs, and contains morphisms that preserve all essential information of positions. Our main result is that this category supports the four well-known isomorphism theorems from universal algebra.
	
	Other authors have studied maps on digraphs in the context of impartial combinatorial games that are similar to option-preserving maps.  For example,~\cite{BanerjiBook,BanerjiErnst,FraenkelPerl,FraenkelYesha,FraenkelYeshaAnnihilation} are concerned with $D$-morphisms, which are maps that allow for more identification than option-preserving maps by permitting certain arrow reversals. For rulegraphs, $D$-morphisms preserve nim-value, and hence outcome, but this is not true for cyclic optiongraphs. This indicates that $D$-morphisms are not the appropriate morphisms for our purposes.

	For a comprehensive treatment of the standard theory of impartial games, consult~\cite{albert2007lessons,ONAG,SiegelBook}. 
	
\section{Preliminaries}\label{sec:prelims}
		
	A \emph{digraph} $D$ is a pair $(V,E)$ where $V$ is a nonempty set of \emph{vertices} and $E\subseteq V\times V$ is the set of \emph{arrows}. We allow $V$ to be infinite and $E$ to contain loops. A \emph{digraph homomorphism} is a function between the vertex sets of two digraphs that maps arrows to arrows. Digraphs and digraph homomorphisms form the category {\bf Gph}. 
	
	We use digraphs to model impartial combinatorial games, where we think of the vertices as positions and arrows as possible moves between positions. We call the elements of the set of out-neighbors of a position $p$ the \emph{options} of $p$. An \emph{optiongraph} is a nonempty set $\DD$ of positions together with an option function $\Opt_{\DD}:\DD\to 2^\DD$. The option function $\Opt_{\DD}$, or simply $\Opt$, encodes the same information as the arrow set, so an optiongraph is essentially a digraph.
	
	During a \emph{play} on an optiongraph, two players take turns replacing the current position with one of its options. So a play is essentially a walk in the optiongraph from a chosen starting position. A position $q$ is a \emph{subposition} of position $p$ if there is a finite walk from $p$ to $q$. A play ends when the current position becomes a \emph{terminal position} without options. In \emph{normal play}, the last player to move wins. In \emph{mis\`ere play}, the last player to move loses. Infinite play is a possibility, in which case the play is considered a draw.
	
	A function $f:\CC\to\DD$ is called \emph{option preserving} if $\Opt_\DD(f(p))=f(\Opt_\CC(p))$ for all $p\in\CC$. This is a generalization of~\cite[Definition 4.1]{Basic2024} for rulegraphs. Option-preserving maps are confusingly called homomorphisms in~\cite[Section 4]{Li}. Example~\ref{ex:FirstIso} shows an option-preserving map.
	The composition of option-preserving functions is clearly option preserving. Optiongraphs and option-preserving maps form the category {\bf OGph}. It is easy to see that an option-preserving map is a digraph homomorphism. The converse is false. So {\bf OGph} is a wide but not full subcategory of {\bf Gph}. This provides the motivation for the renaming of digraphs as optiongraphs in this context.
	If an optiongraph has no infinite play, then we call it a \emph{rulegraph}~\cite{Basic2024}. 
	
	Option-preserving maps preserve so-called valuations on rulegraphs~\cite[Proposition~6.10]{Basic2024}, that is, functions on the positions defined recursively via Opt. In particular, they preserve the formal birthday, nim-value, and outcome of the positions both in normal and mis\`ere play. An option-preserving map also preserves the remoteness and the extended nim-value of an optiongraph~\cite[Theorem 6]{Li}. 
	It seems plausible that an option-preserving map that is compatible with the designation of entailing positions, also preserves the affine nim-values used in~\cite{entailing}.
	
		An optiongraph $\DD$ is an $F$-coalgebra  where $F:{\bf Set}\to{\bf Set}$ is the power set functor on the category of sets and $\Opt:\DD\to F(\DD)$ is a morphism of {\bf Set}.
		Our option-preserving maps are $F$-coalgebra homomorphisms. Since our focus is applications in combinatorial game theory, we develop our results from first principles and do not rely on this viewpoint to keep our results as accessible as possible.
	
\section{Partitions of Optiongraphs}
	
\subsection{Finite-Infinite-Mixed partition}
	An optiongraph $\mathsf{D}$ can be partitioned into three sets $F_{\mathsf{D}}$, $I_{\mathsf{D}}$, and $M_{\mathsf{D}}$, or simply $F$, $I$, $M$ if it is clear from context. Positions without a terminal subposition belong to $I$. Positions that are not the starting vertex of any infinite play belong to $F$. The rest of the positions belong to $M$. It is clear that $\bigcup\Opt(F)\subseteq F$ and $\bigcup\Opt(I)\subseteq I$. The equivalence classes $I$ and $F$ are well understood. Class $I$ contains draw positions, while $F$ is the set of positions of a rulegraph. Class $M$ contains positions that are the most difficult to handle. 
	\begin{proposition}
		If $f:\CC\to\DD$ is an option-preserving surjection, then $f(F_\CC)=F_\DD$, $f(I_\CC)=I_\DD$, and $f(M_\CC)=M_\DD$.
	\end{proposition}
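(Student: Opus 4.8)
The plan is to establish a single structural lemma about how walks behave under $f$, and then read off all three equalities from the definitions of $F$, $I$, and $M$. Because an option-preserving map is a digraph homomorphism, any walk $p_0\to p_1\to\cdots$ in $\CC$ is carried forward to a walk $f(p_0)\to f(p_1)\to\cdots$ in $\DD$. The key observation in the other direction is a \emph{lifting property}: since $\Opt_\DD(f(p))=f(\Opt_\CC(p))$, every option of $f(p)$ in $\DD$ is the image of some option of $p$ in $\CC$. Starting from any $v_0$ with $f(v_0)=w_0$ and choosing preimages step by step, I can lift an arbitrary (finite or infinite) walk $w_0\to w_1\to\cdots$ out of $f(v_0)$ to a walk $v_0\to v_1\to\cdots$ out of $v_0$ with $f(v_i)=w_i$. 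I also need that terminal positions are both preserved and reflected: from $\Opt_\DD(f(p))=f(\Opt_\CC(p))$ and the fact that the image of a set is empty exactly when the set is empty, $f(p)$ is terminal if and only if $p$ is terminal.

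With this lemma in hand, I would first show $f^{-1}(I_\DD)=I_\CC$, i.e.\ that $f(p)\in I_\DD \iff p\in I_\CC$ for every $p$. If $p$ has a terminal subposition $t$, the walk witnessing this maps forward to a walk from $f(p)$ to the terminal position $f(t)$, so $f(p)$ has a terminal subposition; contrapositively, $f(p)\in I_\DD$ forces $p\in I_\CC$. Conversely, if $f(p)$ has a terminal subposition, I lift the witnessing walk to a walk from $p$ ending at a position that is terminal by reflection, so $p$ has a terminal subposition; contrapositively, $p\in I_\CC$ forces $f(p)\in I_\DD$. The same two-sided argument applied to infinite plays instead of terminal subpositions gives $f^{-1}(F_\DD)=F_\CC$: forward-mapping turns an infinite play from $p$ into one from $f(p)$, and lifting turns an infinite play from $f(p)$ into one from $p$.

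From these two preimage identities the result follows formally. Since $\{F,I,M\}$ partitions each optiongraph, $M_\CC=\CC\setminus(F_\CC\cup I_\CC)=\CC\setminus\big(f^{-1}(F_\DD)\cup f^{-1}(I_\DD)\big)=f^{-1}(M_\DD)$, so $f^{-1}$ respects all three classes. Finally, surjectivity upgrades each preimage identity to the desired image identity: the inclusions $f(I_\CC)\subseteq I_\DD$, $f(F_\CC)\subseteq F_\DD$, $f(M_\CC)\subseteq M_\DD$ are immediate, and given any $y\in I_\DD$ (resp.\ $F_\DD$, $M_\DD$), surjectivity provides some $x\in\CC$ with $f(x)=y$, whereupon the relevant preimage identity places $x$ in $I_\CC$ (resp.\ $F_\CC$, $M_\CC$), so that $y\in f(I_\CC)$ (resp.\ $f(F_\CC)$, $f(M_\CC)$).

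The main obstacle is the lifting of \emph{infinite} plays, the only step that is not a routine diagram chase: constructing the lifted walk $v_0\to v_1\to\cdots$ requires infinitely many successive choices of preimages and so relies on the axiom of dependent choice. Everything else — the forward-mapping of walks, the reflection of terminal positions, and the complementation step for $M$ — is elementary. I would therefore state the lifting lemma cleanly for both finite and infinite walks at the outset, so that the three class arguments reduce to invoking it together with the forward-mapping property.
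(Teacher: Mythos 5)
Your proof is correct and takes essentially the same approach as the paper's: the paper's own (very terse) proof rests on exactly the facts you establish — that images of plays are plays, that plays in $\DD$ lift to plays in $\CC$, and that play length (finite or infinite) is preserved in both directions — except that the paper cites an external proposition for these transfer properties. Your proposal simply proves them directly from $\Opt_\DD(f(p))=f(\Opt_\CC(p))$, making explicit the terminal-reflection step and the use of dependent choice for lifting infinite plays, before reading off the three class equalities.
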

	
	\begin{proof}
		Using \cite[Proposition 4.20]{Basic2024}, it is easy to see that the image of a play is a play, and that a play in $\DD$ is an image of a play in $\CC$. Moreover, the length of a play (finite or infinite) is preserved in both directions. 
	\end{proof}

The previous result is useful to speed up the computation of finding option-preserving maps.

	\subsection{Quotient optiongraphs}\label{sec:quotients}
	We write $[p]_\theta$ or simply $[p]$ to denote the equivalence class of $p$ with respect to the equivalence relation $\theta$. An equivalence relation $\theta$ on  an optiongraph $\mathsf{D}$ is a \emph{congruence relation} if $p\mathrel{\theta} q$ implies $\v{\Opt(p)}=\v{\Opt(q)}$, where $\v S:=\{\v s\mid s\in S\}$. Congruence relations for rulegraphs were defined in~\cite{Basic2024}.
	The kernel $\ker(f):=\{(p,q)\in \CC\times\CC \mid f(p)=f(q)\}$ of an option-preserving map $f:\CC\to\DD$ is an example of a congruence relation by Theorem~\ref{thm:first iso}.

	For a congruence relation $\theta$ on $\mathsf{D}$, the \emph{quotient optiongraph} $\mathsf{D}/\theta$ has option function $\Opt_{\theta}([p]):=\Opt_{\DD/\theta}([p]):=[\Opt(p)]$ for all $p\in \DD$.
	This is well defined since $\theta$ is a congruence relation. The \emph{canonical quotient map} $f:\mathsf{D}\to \mathsf{D}/{\theta}$ defined by $f(p):=[p]$ is option preserving since 
	\[
	\Opt_{\theta}(f(p))=\Opt_{\theta}([p])=[\Opt(p)]=\{[q]\mid q\in\Opt(p)\}=\{f(q)\mid q\in\Opt(p)\}=f(\Opt(p)).
	\]
	Note that the notation $\Opt([p])$ is ambiguous. In this case we always mean $\Opt_\theta([p])$ and not $\Opt_\DD([p])$.
	
	Let $\Con(\mathsf{D})$ be the set of congruence relations on an optiongraph $\mathsf{D}$. For every optiongraph $\mathsf{D}$ there is a \emph{maximum congruence relation} $\bowtie_{\mathsf{D}}$, or simply $\bowtie$, on $\mathsf{D}$. We call $\mathsf{D}/{\bowtie}$ the \emph{minimum quotient} of $\mathsf{D}$. The existence of $\bowtie$ appears in~\cite[Theorem 7]{Li}. For rulegraphs this was shown in~\cite[Proposition 7.5]{Basic2024}. Our terminology and notation is quite different from that of~\cite{Li}, so we include the short proof following the development suggested in~\cite[Remark~7.6]{Basic2024} for rulegraphs.
	
	\begin{lemma}\label{lem:transitiveClosure}
		The transitive closure $\theta$ of the union of $\phi,\psi\in \Con(\DD)$ for an optiongraph $\DD$ is also a congruence relation.
	\end{lemma}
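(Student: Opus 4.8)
The plan is to exploit the fact that $\phi,\psi\subseteq\theta$, so that any $\phi$- or $\psi$-relation between two positions upgrades to a $\theta$-relation. First I would note that $\theta$ is genuinely an equivalence relation: since $\phi$ and $\psi$ each contain the diagonal and are symmetric, their union $\phi\cup\psi$ is reflexive and symmetric, and the transitive closure of a reflexive, symmetric relation is reflexive, symmetric, and transitive. Thus it only remains to verify the congruence condition, namely that $p\mathrel{\theta}q$ implies $[\Opt(p)]_\theta=[\Opt(q)]_\theta$.

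The conceptual obstacle is that the congruence condition for $\theta$ is phrased in terms of $\theta$-classes of options, whereas all we are handed is that $\phi$ and $\psi$ are congruences with respect to their own classes. The bridge is the following claim: if $\rho\in\Con(\DD)$ satisfies $\rho\subseteq\theta$ and $p\mathrel{\rho}q$, then $[\Opt(p)]_\theta=[\Opt(q)]_\theta$. To prove the claim, I would use that $\rho$ is a congruence to get $[\Opt(p)]_\rho=[\Opt(q)]_\rho$; hence for every $a\in\Opt(p)$ there is $b\in\Opt(q)$ with $a\mathrel{\rho}b$. Because $\rho\subseteq\theta$, this gives $a\mathrel{\theta}b$, so $[a]_\theta=[b]_\theta\in[\Opt(q)]_\theta$, proving $[\Opt(p)]_\theta\subseteq[\Opt(q)]_\theta$; the reverse inclusion is symmetric. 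Since $\phi\subseteq\theta$ and $\psi\subseteq\theta$, the claim applies to both $\rho=\phi$ and $\rho=\psi$.

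Finally I would invoke the standard description of the transitive closure: $p\mathrel{\theta}q$ means there is a finite chain $p=x_0,x_1,\dots,x_n=q$ in which each consecutive pair lies in $\phi\cup\psi$, that is, $x_i\mathrel{\phi}x_{i+1}$ or $x_i\mathrel{\psi}x_{i+1}$. Applying the claim to each link yields $[\Opt(x_i)]_\theta=[\Opt(x_{i+1})]_\theta$, and chaining these equalities along the walk gives $[\Opt(p)]_\theta=[\Opt(q)]_\theta$, as required.

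I expect the only genuinely delicate point to be the upgrade step inside the claim—recognizing that although the congruence property of $\phi$ only matches options up to $\phi$-equivalence, the inclusion $\phi\subseteq\theta$ coarsens these matched options into equal $\theta$-classes. Everything else is a routine assembly of the transitive-closure chain, and nowhere do I need the positions to have finitely many options, so the argument is unaffected by $\DD$ being infinite.
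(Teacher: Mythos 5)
Your proof is correct and takes essentially the same approach as the paper: decompose a $\theta$-relation into a finite chain of $\phi$- or $\psi$-links, use the congruence property of each link together with $\phi,\psi\subseteq\theta$ to upgrade the link-wise equality of option classes to equality of $\theta$-classes, and chain these equalities. Your intermediate claim about a congruence $\rho\subseteq\theta$ is just a spelled-out version of the paper's one-line observation that $[s]_{\eta_i}\subseteq[s]_{\theta}$, and your verification that $\theta$ is an equivalence relation is a routine point the paper leaves implicit.
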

	
	\begin{proof}
		Assume $(p,q)\in\theta$. Then 
		\[
		p=r_0\mathrel{\eta_0} r_1 \mathrel{\eta_1} \cdots \mathrel{\eta_{k-2}} r_{k-1} \mathrel{\eta_{k-1}} r_k=q
		\]
		for some positions $r_0,\ldots,r_k$ and $\eta_0,\ldots,\eta_{k-1}\in\{\phi,\psi\}$. Hence $[\Opt(r_i)]_{\eta_i}=[\Opt(r_{i+1})]_{\eta_i}$ for all $i$. Since $[s]_{\eta_i}\subseteq[s]_{\theta}$ for all $s\in\Opt(r_i)$, this implies that $[\Opt(r_i)]_{\theta}=[\Opt(r_{i+1})]_{\theta}$ for all $i$.
	\end{proof}
	
	\begin{proposition}\label{prop:bowtie}
		The union ${\bowtie}:=\bigcup\Con(\DD)$ is a congruence relation on the optiongraph $\mathsf{D}$. 
	\end{proposition}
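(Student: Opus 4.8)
The plan is to check the two defining properties of a congruence relation for $\bowtie$: that it is an equivalence relation, and that it satisfies the option condition $(p,q)\in{\bowtie}\Rightarrow [\Opt(p)]_{\bowtie}=[\Opt(q)]_{\bowtie}$. Reflexivity and symmetry come for free: the diagonal relation is clearly a congruence, so it sits inside the union $\bowtie$, giving reflexivity, while symmetry of $\bowtie$ follows at once from symmetry of each individual congruence in $\Con(\DD)$. The genuine content is therefore transitivity---since a union of equivalence relations need not be transitive in general---together with the option condition.

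For transitivity, I would suppose $(p,q)\in{\bowtie}$ and $(q,r)\in{\bowtie}$. By definition of the union there are congruences $\phi,\psi\in\Con(\DD)$ with $(p,q)\in\phi$ and $(q,r)\in\psi$. This is exactly where Lemma~\ref{lem:transitiveClosure} does the work: the transitive closure $\theta$ of $\phi\cup\psi$ is again a congruence, so $\theta\in\Con(\DD)$ and hence $\theta\subseteq{\bowtie}$. Since both pairs lie in $\phi\cup\psi\subseteq\theta$ and $\theta$ is transitive, we obtain $(p,r)\in\theta\subseteq{\bowtie}$, as needed. Note that only two congruences are ever involved, so a single application of the lemma suffices.

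It remains to verify the option condition. Given $(p,q)\in{\bowtie}$, I would choose $\theta\in\Con(\DD)$ with $(p,q)\in\theta$; then $[\Opt(p)]_{\theta}=[\Opt(q)]_{\theta}$ because $\theta$ is a congruence. The next step is to coarsen this equality from $\theta$ to $\bowtie$ using the containment $\theta\subseteq{\bowtie}$, which gives $[s]_{\theta}\subseteq[s]_{\bowtie}$ for every position $s$. Concretely, for $s\in\Opt(p)$ the equality of $\theta$-classes produces some $t\in\Opt(q)$ with $(s,t)\in\theta\subseteq{\bowtie}$, so $[s]_{\bowtie}=[t]_{\bowtie}$; this shows $[\Opt(p)]_{\bowtie}\subseteq[\Opt(q)]_{\bowtie}$, and the symmetric argument yields the reverse inclusion.

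The main obstacle is transitivity, and it has essentially been isolated and dispatched in advance by Lemma~\ref{lem:transitiveClosure}: the subtlety that unions of equivalence relations can fail to be transitive is exactly what that lemma neutralizes, by guaranteeing that the transitive closure of a union of two congruences remains a congruence and therefore cannot escape $\bowtie$. Once transitivity is in hand, the option condition is a routine passage to a coarser relation and presents no real difficulty.
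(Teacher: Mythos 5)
Your proposal is correct and follows essentially the same route as the paper: reflexivity and symmetry are immediate, transitivity is obtained by applying Lemma~\ref{lem:transitiveClosure} to the two congruences witnessing $(p,q)$ and $(q,r)$, and the option condition is verified by coarsening $[\Opt(p)]_{\theta}=[\Opt(q)]_{\theta}$ to $\bowtie$-classes via $\theta\subseteq{\bowtie}$. Your only addition is spelling out the element-wise inclusion argument for that last coarsening step, which the paper leaves implicit.
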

	
	\begin{proof}
		The relation $\bowtie$ is clearly reflexive and symmetric. If $(p,q),(q,r)\in{\bowtie}$, then $(p,q)\in\phi$ and $(q,r)\in\psi$ for some $\phi,\psi\in \Con(\DD)$. Hence $(p,r)\in\theta\subseteq{\bowtie}$ by Lemma~\ref{lem:transitiveClosure}, where $\theta$ is the transitive closure of $\phi$ and $\psi$. This implies that $\bowtie$ is also transitive. 
		Assume $(p,q)\in{\bowtie}$, so that $(p,q)\in\theta$ for some $\theta\in\Con(\DD)$. Then $[\Opt(p)]_{\theta}=[\Opt(q)]_{\theta}$, and hence $[\Opt(p)]_{\bowtie}=[\Opt(q)]_{\bowtie}$ since $\theta\subseteq{\bowtie}$.
	\end{proof}
	
	In~\cite{Basic2024}, the authors prove that $\Con(\mathsf{R})$ is a complete lattice when $\mathsf{R}$ is a rulegraph. We extend this to optiongraphs.
	
	\begin{proposition}\label{prop:complete lattice}
		The set $\Con(\DD)$ of congruence relations on an optiongraph $\DD$ forms a complete lattice under inclusion.
	\end{proposition}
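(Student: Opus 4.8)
The plan is to produce arbitrary suprema in the poset $(\Con(\DD),\subseteq)$ and then invoke the standard order-theoretic fact that a partially ordered set in which every subset has a supremum is automatically a complete lattice, the infimum of a family being recovered as the supremum of the set of its lower bounds. Inclusion is plainly a partial order on $\Con(\DD)$, so the entire burden falls on constructing joins.

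First I would generalize Lemma~\ref{lem:transitiveClosure} from a pair of congruences to an arbitrary family: for any $S\subseteq\Con(\DD)$, the transitive closure $\theta$ of $\bigcup S$ is again a congruence. When $S$ is nonempty, $\bigcup S$ is already reflexive and symmetric since each member is, so $\theta$ is an equivalence relation, and the congruence property follows by the same argument as in Lemma~\ref{lem:transitiveClosure}: a pair $(p,q)\in\theta$ is witnessed by a finite chain $p=r_0\mathrel{\eta_0}\cdots\mathrel{\eta_{k-1}}r_k=q$ with each $\eta_i\in S$, and since $\eta_i\subseteq\theta$ one obtains $[\Opt(r_i)]_\theta=[\Opt(r_{i+1})]_\theta$ at each step, hence $[\Opt(p)]_\theta=[\Opt(q)]_\theta$ by chaining these equalities. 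This $\theta$ is then the join $\bigvee S$: it is a congruence containing every member of $S$, and any congruence containing all members of $S$ contains $\bigcup S$ and, being transitive, its transitive closure $\theta$. For the empty family the join is the diagonal relation $\Delta$, which is trivially a congruence and is the least element of $\Con(\DD)$.

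With all suprema in hand the conclusion is immediate from the standard criterion; for consistency one notes that the top element is $\bowtie$ from Proposition~\ref{prop:bowtie} (indeed $\bowtie$ is its own transitive closure and equals $\bigvee\Con(\DD)$), while meets exist as suprema of lower-bound sets, which are always nonempty because they contain $\Delta$.

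The main subtlety, and the reason the usual universal-algebra argument does not transfer directly, is that congruence relations on an optiongraph are \emph{not} closed under intersection, so meets cannot be computed as intersections. This is already visible when $\Opt(p)=\{a,b\}$ and $\Opt(q)=\{c,d\}$ with all four options terminal and distinct: the partitions $\{p,q\},\{a,c\},\{b,d\}$ and $\{p,q\},\{a,d\},\{b,c\}$ are both congruences, yet their intersection identifies only $p$ with $q$ and therefore fails the congruence condition at $(p,q)$, since $\{\{a\},\{b\}\}\neq\{\{c\},\{d\}\}$. This is exactly why I route the whole argument through joins rather than meets, using the transitive-closure construction to realize each join explicitly rather than attempting to intersect.
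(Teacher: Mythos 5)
Your proof is correct, and it takes a genuinely different route from the paper---one that turns out to matter. The paper's proof is a two-line appeal to \cite[Proposition 7.16]{Basic2024}: it declares the meet of $\Theta\subseteq\Con(\DD)$ to be $\bigcap\Theta$ and the join to be the meet of all congruences containing $\bigcup\Theta$. You instead construct arbitrary joins explicitly, by generalizing Lemma~\ref{lem:transitiveClosure} from two congruences to an arbitrary family (the same chaining argument works verbatim), and then invoke the standard fact that a poset with all suprema is a complete lattice, meets being suprema of sets of lower bounds. The key difference is your closing observation, which is valid: congruence relations on an optiongraph are \emph{not} closed under intersection. In your example, with $\Opt(p)=\{a,b\}$, $\Opt(q)=\{c,d\}$, and $a,b,c,d$ distinct terminal positions, the partitions $pq|ac|bd$ and $pq|ad|bc$ are indeed both congruences (each is the kernel of an evident option-preserving surjection onto a three-position optiongraph), while their intersection, whose only nontrivial class is $\{p,q\}$, violates the congruence condition because $\{\{a\},\{b\}\}\ne\{\{c\},\{d\}\}$; note your example is even a rulegraph. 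Consequently the formula $\bigwedge\Theta:=\bigcap\Theta$ in the paper's printed proof is not well defined as an operation on $\Con(\DD)$, and the join formula built on top of it inherits the problem. This is the expected coalgebraic behavior---bisimulation equivalences are closed under transitive closures of unions but not under intersections---in contrast with congruences of algebras in universal algebra, so the ``carries over'' claim needs exactly the correction you supply. The proposition itself is true, and your argument proves it: the meet of $\Theta$ must be taken as the largest congruence contained in $\bigcap\Theta$, equivalently the join of all common lower bounds, which your construction provides. In short, your proof is not merely an alternative; it repairs the paper's.
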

	
	\begin{proof}
		The proof presented in~\cite[Proposition 7.16]{Basic2024} carries over to optiongraphs. The meet and join of $\Theta\subseteq \Con(\DD)$ are defined by $\bigwedge \Theta:=\bigcap \Theta$ and $\bigvee \Theta:=\bigwedge\{\theta\in \Con(\DD)\mid \bigcup \Theta\subseteq \theta\}$.
	\end{proof}
	
	\begin{example} \rm
		Figure~\ref{fig:Con} shows an optiongraph and its lattice of congruence relations. Congruence relations are indicated by a list of nontrivial congruence classes separated by bars. 
		For example, the classes of ${\bowtie}=ab|st|wz$ are $\{a,b\}$, $\{r\}$, $\{s,t\}$, and $\{w,z\}$.
	\end{example}
	
	\begin{figure}[h]
		\hfil
		\begin{tikzpicture}[xscale=1]
			\node (w) [small vert,fill=yellow!70] at (0,.5) {$\scriptstyle w$};
			\node (a) [small vert] at (2,1) {$\scriptstyle a$};
			\node (b) [small vert] at (2,0) {$\scriptstyle b$};
			\node (s) [small vert,fill=green!40] at (3,1) {$\scriptstyle  s$};
			\node (t) [small vert,fill=green!40] at (3,0) {$\scriptstyle  t$};
			\node (z) [small vert,fill=yellow!70] at (1,.5) {$\scriptstyle  z$};
			\node (r) [small vert,fill=lime!45] at (3.8,.5) {$\scriptstyle  r$};
			\path [d] (w) to (z);
			\path [d] (a) to (b);
			\path [d] (b) to (a);
			\path [d] (a) to (s);
			\path [d] (b) to (t);
			\path [d] (b) to (z);
			\path [d] (a) to (z);
			\path [d] (r) to (s);
			\path [d] (r) to (t);
			\path [d,loop left,distance=7mm] (w) to (w);
			\path [d,loop above,distance=7mm] (z) to (z);
			\path [d,loop above,distance=7mm] (a) to (a);
			\node  at (2,-.5) {$\mathsf{D}$};
		\end{tikzpicture}
		\hfil %%%%%%%%%%%%%%%%%%%%%%%%%%%%%%%%%%%%%%%%%%%%%%%%%%%%%%%%%%%%%%
		\begin{tikzpicture}[xscale=1,yscale=.6]
			\node (abcxy) [rect vert] at (-1,3) {$\scriptstyle ab|st|wz$};
			\node (ay cx) [rect vert] at (-2,2) {$\scriptstyle ab|st$};
			\node (abcy) [rect vert] at (0,2) {$\scriptstyle st|wz$};
			\node (abc) [rect vert] at (0,1) {$\scriptstyle wz $};
			\node (cx) [rect vert] at (-2,1) {$\scriptstyle st $};
			\node (none) [rect vert] at (-1,0.4) {$\scriptstyle  $};
			\path [d,-] (abcxy) to (ay cx);
			\path [d,-] (abcxy) to (abcy);
			\path [d,-] (abcy) to (abc);
			\path [d,-] (cx) to (none);
			\path [d,-] (abc) to (none);
			\path [d,-] (ay cx) to (cx);
			\path [d,-] (abcy) to (cx);
			\node  at (-1,-.5) {$\Con(\mathsf{D})$};
		\end{tikzpicture}
		\caption{
			\label{fig:Con}
			Lattice of congruence relations of an optiongraph.
		}
	\end{figure}
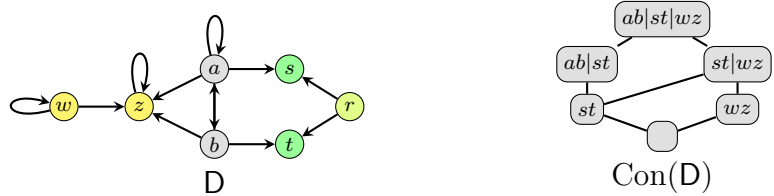
	
	\begin{example}\label{ex:extendednims} \rm
		Figure~\ref{fig:loopyEx} shows an optiongraph $\mathsf{D}$ and its minimum quotient $\mathsf{D}/{\bowtie}$. The figure also shows the extended nim-values and the remoteness values~\cite{Li} of the positions of $\mathsf{D}/{\bowtie}$, which can be lifted to $\DD$. Note that $\DD$ contains both a cycle and an infinite path. The unlabeled positions of $\mathsf{D}$ belong to $I$, while $F=\{s,t,c\}$ and $M=\{a,b\}$. 
	\end{example}
	
	\begin{figure}[h]
		\hfil
		\begin{tikzpicture}[yscale=.8,xscale=.8]
			\node (a) [small vert] at (1,0) {$\scriptstyle a$};
			\node (t) [small vert,fill=green!40] at (0,0) {$\scriptstyle t$};
			\node (b) [small vert] at (1,1) {$\scriptstyle b$};
			\node (y) [small vert,fill=yellow!70] at (2,1) {$\scriptstyle $};
			\node (x) [small vert,fill=yellow!70] at (3,1) {$\scriptstyle $};
			\node (c) [small vert,fill=lime!45] at (0,1) {$\scriptstyle c$};
			\node (s) [small vert,fill=green!40] at (-1,1) {$\scriptstyle s$};
			\node (z) [small vert,fill=yellow!70] at (2,0) {$\scriptstyle $};
			\node (z1) at (3,0) {$\cdots $};
			
			\path [d] (a) to (b);
			\path [d] (a) to (t);
			\path [d] (b) to (y);
			\path [d] (b) to (c);
			\path [d] (c) to (s);
			\path [d] (b) to (z);
			\path [d] (y) to (x);
			\path [d] (z) to (z1);
			\path [d] (a) to (c);
			\path [d,loop right,distance=7mm] (x) to (x);
			
			\node  at (1.5,-.8) {$\mathsf{D}$};   
		\end{tikzpicture}
		\hfil %%%%%%%%%%%%%%%%%%%%%%%%%%%%%%%%%%%%%%
		\begin{tikzpicture}[yscale=.8,xscale=1]
			\node (a) [rect vert] at (1,0) {$\scriptstyle \{a\}$};
			\node (st) [rect vert,fill=green!40] at (0,0) {$\scriptstyle \{s,t\}$};
			\node (b) [rect vert] at (1,1) {$\scriptstyle \{b\}$};
			\node (c) [rect vert,fill=lime!45] at (0,1) {$\scriptstyle \{c\}$};
			\node (xyz) [rect vert,fill=yellow!70] at (2,1) {$\scriptstyle\ $};
			
			\path [d] (a) to (b);
			\path [d] (a) to (st);
			\path [d] (b) to (xyz);
			\path [d] (b) to (c);
			\path [d] (c) to (st);
			\path [d] (a) to (c);
			\path [d,loop right,distance=7mm] (xyz) to (xyz);
			
			\node  at (1,-.8) {$\mathsf{D}/{\bowtie}$};   
		\end{tikzpicture}
		\hfil %%%%%%%%%%%%%%%%%%%%%%%%%%%%%%%%%%%%%%
		\begin{tikzpicture}[yscale=.8,xscale=1]
			\node (a) [rect vert] at (1,0) {$\scriptstyle \infty_{0,1}$};
			\node (st) [rect vert,fill=green!40] at (0,0) {$\scriptstyle 0$};
			\node (b) [rect vert] at (1,1) {$\scriptstyle \infty_1$};
			\node (c) [rect vert,fill=lime!45] at (0,1) {$\scriptstyle 1$};
			\node (xyz) [rect vert,fill=yellow!70] at (2,1) {$\scriptstyle\infty $};
			
			\path [d] (a) to (b);
			\path [d] (a) to (st);
			\path [d] (b) to (xyz);
			\path [d] (b) to (c);
			\path [d] (c) to (st);
			\path [d] (a) to (c);
			\path [d,loop right,distance=7mm] (xyz) to (xyz);
			
			\node  at (1.3,-.8) {$\mathsf{D}/{\bowtie}$ nim-values};   
		\end{tikzpicture}
		\hfil %%%%%%%%%%%%%%%%%%%%%%%%%%%%%%%%%%%%%%
		\begin{tikzpicture}[yscale=.8,xscale=1]
			\node (a) [rect vert] at (1,0) {$\scriptstyle 1$};
			\node (st) [rect vert,fill=green!40] at (0,0) {$\scriptstyle 0$};
			\node (b) [rect vert] at (1,1) {$\scriptstyle \infty$};
			\node (c) [rect vert,fill=lime!45] at (0,1) {$\scriptstyle 1$};
			\node (xyz) [rect vert,fill=yellow!70] at (2,1) {$\scriptstyle\infty $};
			
			\path [d] (a) to (b);
			\path [d] (a) to (st);
			\path [d] (b) to (xyz);
			\path [d] (b) to (c);
			\path [d] (c) to (st);
			\path [d] (a) to (c);
			\path [d,loop right,distance=7mm] (xyz) to (xyz);
			
			\node  at (1.3,-.8) {$\mathsf{D}/{\bowtie}$ remoteness};   
		\end{tikzpicture}
		
		\caption{\label{fig:loopyEx}
			An optiongraph $\DD$, its minimum quotient $\DD/{\bowtie}$, together with extended nim-values and remoteness on $\DD/{\bowtie}$. 
		}
	\end{figure}
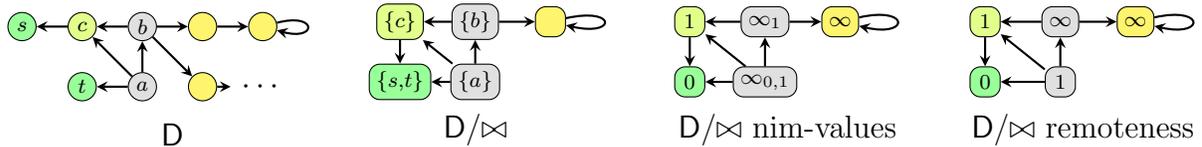
	
	Our companion web page~\cite{optiongraphs_companion} provides code for finding the congruence relations of a finite optiongraph. The code uses the CPMpy constraint programming Python library~\cite{guns2019increasing}.
	
	\begin{question}
		\rm
		What lattices can arise as the lattice of congruence relations of an optiongraph? 
	\end{question}
	
	\section{First Isomorphism Theorem}\label{sec:first iso thm}
	
	An optiongraph $\mathsf{C}$ is a \emph{suboptiongraph} of an optiongraph $\mathsf{D}$ if $\mathsf{C}\subseteq \mathsf{D}$ and the inclusion map $\mathsf{C}\hookrightarrow \mathsf{D}$ is option preserving. We use the notation $\mathsf{C}\le\mathsf{D}$ to indicate this relationship.

	\begin{example} \rm
		When nonempty, both $F_{\mathsf{D}}$ and $I_{\mathsf{D}}$ form the positions of suboptiongraphs $\mathsf{F}_{\mathsf{D}}$ and $\mathsf{I}_{\mathsf{D}}$ of $\mathsf{D}$, respectively. Suboptiongraph $\mathsf{F}_{\mathsf{D}}$ is actually a rulegraph.
	\end{example}
	
	If $f:\mathsf{C}\to \mathsf{D}$ is option preserving, then $f(\CC)$ is closed under the $\Opt_\mathsf{D}$ function. So $f(\CC)$ is a suboptiongraph of $\DD$. 
	
	\begin{theorem}[First Isomorphism]\label{thm:first iso}\rm 
		If $f:\mathsf{C}\to\mathsf{D}$ is an option-preserving map, then $\ker(f)\in\Con(\CC)$ and $f(\mathsf{C})$ is isomorphic to the quotient $\mathsf{Q}:=\mathsf{C}/\ker(f)$.
	\end{theorem}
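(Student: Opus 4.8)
The plan is to prove the two assertions separately, relying throughout on the defining identity $\Opt_\DD(f(p))=f(\Opt_\CC(p))$ of an option-preserving map and on the elementary fact that two positions are $\ker(f)$-equivalent precisely when $f$ agrees on them, i.e.\ $[s]_{\ker(f)}=[t]_{\ker(f)}\iff f(s)=f(t)$.

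First I would record that $\ker(f)$ is an equivalence relation, which is immediate since it is the kernel of a function. To see that it is a congruence, suppose $(p,q)\in\ker(f)$, so that $f(p)=f(q)$. Option preservation then gives
\[
f(\Opt_\CC(p))=\Opt_\DD(f(p))=\Opt_\DD(f(q))=f(\Opt_\CC(q)),
\]
so the two option sets have the same image under $f$ as subsets of $\DD$. The remaining step is to promote this equality of images to the required equality $\v{\Opt(p)}=\v{\Opt(q)}$ of sets of $\ker(f)$-classes. For this I would argue elementwise: given $s\in\Opt(p)$, the element $f(s)$ lies in $f(\Opt(p))=f(\Opt(q))$, so $f(s)=f(t)$ for some $t\in\Opt(q)$; hence $[s]=[t]\in\v{\Opt(q)}$, giving $\v{\Opt(p)}\subseteq\v{\Opt(q)}$, and symmetry yields equality. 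This is exactly the condition that $\ker(f)\in\Con(\CC)$.

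For the isomorphism, write $\theta:=\ker(f)$ and define $\bar f:\mathsf{Q}\to f(\CC)$ by $\bar f([p]):=f(p)$. Well-definedness and injectivity are two sides of the same equivalence $[p]=[q]\iff f(p)=f(q)$, and surjectivity onto $f(\CC)$ is clear, so $\bar f$ is a bijection. To check that $\bar f$ is option preserving, I would compute both sides. Using that $f(\CC)$ is a suboptiongraph, whose option function is therefore the restriction of $\Opt_\DD$, we have $\Opt_{f(\CC)}(\bar f([p]))=\Opt_\DD(f(p))=f(\Opt(p))$; on the other hand $\bar f(\Opt_\theta([p]))=\bar f(\v{\Opt(p)})=\{f(s)\mid s\in\Opt(p)\}=f(\Opt(p))$, so the two agree.

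Finally, a bijective option-preserving map is automatically an isomorphism in {\bf OGph}: applying $\bar f^{-1}$ to the identity $\Opt(\bar f(y))=\bar f(\Opt(y))$ shows that $\bar f^{-1}$ is option preserving as well. I expect the one genuinely nontrivial point to be the congruence step, namely passing from equality of the $f$-images $f(\Opt(p))=f(\Opt(q))$ in $\DD$ to equality of the $\theta$-classed sets $\v{\Opt(p)}=\v{\Opt(q)}$ in $\mathsf{Q}$; everything else is bookkeeping with the definitions.
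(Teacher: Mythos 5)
Your proof is correct and follows essentially the same route as the paper: showing $\ker(f)\in\Con(\CC)$ via the option-preserving identity, then verifying that the canonical map $[p]\mapsto f(p)$ is an isomorphism onto the suboptiongraph $f(\CC)$. The only difference is cosmetic — the paper establishes the congruence property with a single computation expressing $[\Opt_\CC(p)]$ as $\{f^{-1}(\{s\})\mid s\in\Opt_\DD(f(p))\}$, a set depending only on $f(p)$, whereas you argue elementwise — and your explicit check that the inverse of a bijective option-preserving map is option preserving is a detail the paper leaves implicit.
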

	
	\begin{proof}
		Under $\ker(f)$ we have
		\begin{align*}
		[\Opt_\mathsf{C}(p)]
		&=\{\{r\mid f(r)=f(q)\}\mid q\in\Opt_\mathsf{C}(p)\}\\
		&=\{\{r\mid f(r)=s\}\mid s\in\Opt_\mathsf{D}(f(p))\}\\
		&=\{f^{-1}(\{s\})\mid s\in\Opt_\mathsf{D}(f(p)) \}.
		\end{align*}
		This formula shows that $f(p)=f(q)$ implies $[\Opt_\mathsf{C}(p)]=[\Opt_\mathsf{C}(q)]$. So $\ker(f)$ is a congruence relation on $\mathsf{C}$. 
		Since $f(q)\in \Opt_\mathsf{D}(f(p))=f(\Opt_\mathsf{C}(p))$ if and only if $[q]\in\Opt_\mathsf{Q}([p])=[\Opt_\mathsf{C}(p)]$, the well-defined map $[p]\mapsto f(p):\mathsf{Q}\to f(\mathsf{C})$ is an isomorphism.
	\end{proof}
	
	\begin{example} \label{ex:FirstIso} \rm
		Figure~\ref{fig:1stIso} demonstrates the First Isomorphism Theorem, where the option-preserving map $f:\mathsf{C}\to\mathsf{D}$ takes $a,b,c$ to $y$ and $d$ to $z$. Note that $(a,b)\in\ker(f)$ but $\Opt(a)=\{b,d\}\ne\{c,d\}=\Opt(b)$, which illustrates that congruent positions need not have the same option sets. Note that ${F}_{\mathsf{C}}=\{d\}$, $I_{\mathsf{C}}=\emptyset$, $M_{\mathsf{C}}=\{a,b,c\}$, and $\ker(f)={\bowtie_\CC}$.
	\end{example}
	
	\begin{figure}[h]
		\hfil
		\begin{tikzpicture}[yscale=.9,xscale=.9]
			\node (a) [small vert,fill=orange!50] at (0,1) {$\scriptstyle a$};
			\node (d) [small vert,fill=cyan!40] at (1,0) {$\scriptstyle d$};
			\node (b) [small vert,fill=orange!50] at (1,1) {$\scriptstyle b$};
			\node (c) [small vert,fill=orange!50] at (0,0) {$\scriptstyle c$};
			
			\path [d] (a) to (b);
			\path [d] (a) to (d);
			\path [d] (b) to (d);
			\path [d] (b) to (c);
			\path [d] (c) to (b);
			\path [d] (c) to (d);
			% \path [d] (a) to (c);
			
			\node  at (.5,-.5) {$\mathsf{C}$};   
		\end{tikzpicture}
		\hfil
		%%%%%%%%%%%%%%%%%%%%%%%%%%%%%%%%%%%%%%%%%%%%%%%%%%%
		\begin{tikzpicture}[yscale=.9,xscale=.9]
			\node (x) [small vert] at (1,1) {$\scriptstyle x$};
			\node (y) [small vert,fill=orange!50] at (1,0) {$\scriptstyle y$};
			\node (z) [small vert,fill=cyan!40] at (2,0) {$\scriptstyle z$};
			\node (w) [small vert] at (2,1) {$\scriptstyle w$};
			
			\path [d] (x) to (y);
			\path [d] (y) to (z);
			\path [d] (w) to (y);
			\path [d] (w) to (z);
			\path [d,loop left,distance=7mm] (y) to (y);
			
			\node  at (1.5,-.5) {$\mathsf{D}$};   
		\end{tikzpicture}
		\hfil
		%%%%%%%%%%%%%%%%%%%%%%%%%%%%%%%%%%%%%%%%%%%%%%%%%%%
		\begin{tikzpicture}[xscale=1.5,yscale=1]
			\node (y) [rect vert,,fill=orange!50] at (1,0) {$\scriptstyle\{a,b,c\}$};
			\node (z) [rect vert,fill=cyan!40] at (2,0) {$\scriptstyle \{d\}$};
			
			\path [d] (y) to (z);
			\path [d,loop left,distance=3.8mm] (y) to (y);
			
			\node  at (1.5,-.8) {$\mathsf{C}/\ker(f)$};   
		\end{tikzpicture}
		\caption{\label{fig:1stIso}
			First Isomorphism Theorem example.
		}
	\end{figure}
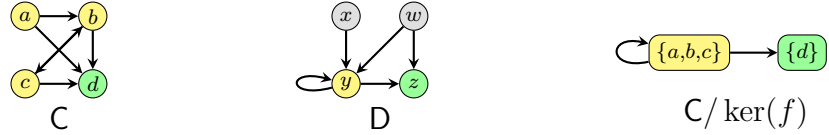
	
	\section{Second Isomorphism Theorem}
	\label{sec:second iso thm}
	
	\begin{lemma}
		\label{lem:cong on h}
		If $\mathsf{C}\leq\mathsf{D}$, then the restriction $\theta_{|\CC}$ of a congruence relation $\theta$ on $\mathsf{D}$ to $\mathsf{C}$ is a congruence relation on $\mathsf{C}$.
	\end{lemma}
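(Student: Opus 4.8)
The plan is to check the two defining requirements for the restriction $\theta_{|\CC}:=\theta\cap(\CC\times\CC)$ to be a congruence relation on $\CC$: that it is an equivalence relation, and that it respects $\Opt_\CC$. The first requirement is immediate, since the restriction of any equivalence relation on $\DD$ to a subset $\CC$ is again an equivalence relation; so I would dispatch it in a sentence and spend the work on the congruence property.

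Before computing, I would record two preliminary observations that do all the real work. First, the interaction of the two relations on $\CC$: for $p\in\CC$ we have $[p]_{\theta_{|\CC}}=[p]_{\theta}\cap\CC$, and consequently, for any $r,r'\in\CC$, the equality $[r]_{\theta_{|\CC}}=[r']_{\theta_{|\CC}}$ holds if and only if $r\mathrel{\theta}r'$, i.e.\ if and only if $[r]_{\theta}=[r']_{\theta}$. Thus, restricted to elements of $\CC$, comparing $\theta_{|\CC}$-classes is the same as comparing $\theta$-classes. Second, the consequence of $\CC\le\DD$ that makes the two settings compatible: because the inclusion $\CC\hookrightarrow\DD$ is option preserving, $\Opt_\CC(p)=\Opt_\DD(p)$ for every $p\in\CC$, and in particular $\Opt_\CC(p)\subseteq\CC$. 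So a position of $\CC$ has the same option set whether computed in $\CC$ or in $\DD$, and all of its options remain in $\CC$.

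With these in hand the congruence property becomes a short symmetric argument rather than a calculation. Suppose $p\mathrel{\theta_{|\CC}}q$; then $p\mathrel{\theta}q$, so the congruence property of $\theta$ on $\DD$ gives $[\Opt_\DD(p)]_{\theta}=[\Opt_\DD(q)]_{\theta}$. I would then take any $r\in\Opt_\CC(p)=\Opt_\DD(p)$, note that $[r]_{\theta}$ lies in $[\Opt_\DD(q)]_{\theta}$, and extract $s\in\Opt_\DD(q)=\Opt_\CC(q)$ with $r\mathrel{\theta}s$. Since $r,s\in\CC$, the first preliminary observation upgrades this to $[r]_{\theta_{|\CC}}=[s]_{\theta_{|\CC}}$ with $s\in\Opt_\CC(q)$, showing $[r]_{\theta_{|\CC}}\in[\Opt_\CC(q)]_{\theta_{|\CC}}$. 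This yields the inclusion $[\Opt_\CC(p)]_{\theta_{|\CC}}\subseteq[\Opt_\CC(q)]_{\theta_{|\CC}}$, and the reverse inclusion follows by interchanging $p$ and $q$, giving the required equality.

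The one point that demands care — and the place I expect a reader to stumble — is the bookkeeping that keeps the two relations apart: the hypothesis produces equal \emph{sets of $\theta$-classes}, which are subsets of $\DD$, whereas the conclusion concerns equal \emph{sets of $\theta_{|\CC}$-classes}, which are subsets of $\CC$. Neither the classes nor the ambient sets coincide, so the argument cannot simply intersect both sides with $\CC$. What bridges the gap is precisely the combination of the two preliminary observations: comparison of classes agrees on elements of $\CC$, and $\CC\le\DD$ forces every option of a position in $\CC$ to stay inside $\CC$ with an unchanged option set. Once these are isolated, the main step is essentially bookkeeping; without them it is easy to conflate the two relations and produce a gap.
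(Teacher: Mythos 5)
Your proof is correct and follows essentially the same route as the paper: both hinge on the observations that $[p]_{\theta_{|\CC}}=[p]_{\theta}\cap\CC$ and that $\CC\le\DD$ forces $\Opt_\CC(p)=\Opt_\DD(p)\subseteq\CC$, then invoke the congruence property of $\theta$ on $\DD$. The paper merely packages your element-by-element double inclusion as a single set-level computation, intersecting the equal families of $\theta$-classes with $\CC$.
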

	
	\begin{proof}
		First, recall that an equivalence relation restricted to a subset is an equivalence relation on that subset. So $\theta_{|\CC}$ is an equivalence relation on $\mathsf{C}$.  Let $[p]_\DD$ and $[p]_\CC$ denote the equivalence classes of $p$ with respect to $\theta$ and $\theta_{|\CC}$, respectively. If $(p,q)\in\theta_{|\CC}$, then $(p,q)\in\theta$ and so  $[\Opt(p)]_\DD=[\Opt(q)]_\DD$. Hence
		\begin{align*}
			[\Opt(p)]_\CC
			&=\{[r]_\CC \mid r\in \Opt(p)\} =\{[r]_\DD\cap \CC \mid r\in \Opt(p)\} \\
			&=\{[r]_\DD\cap \CC \mid r\in \Opt(q)\} =\{[r]_\CC \mid r\in \Opt(q)\} =[\opt q]_\CC.\qedhere
		\end{align*}
	\end{proof}
	
	\begin{theorem}[Second Isomorphism]\label{thm:second iso}\rm 
		If $\mathsf{C}\leq\mathsf{D}$ and $\theta\in\Con(\DD)$, then $\tilde{\mathsf{C}}=\{[p]\in \DD/\theta \mid [p]\cap \CC\neq \emptyset\}$ is a suboptiongraph of $\mathsf{D}/{\theta}$ isomorphic to $\mathsf{C}/{\theta_{|\CC}}$.  
	\end{theorem}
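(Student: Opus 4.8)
The plan is to split the statement into its two assertions and handle them in order: first that $\tilde{\mathsf{C}}$ is a suboptiongraph of $\mathsf{D}/\theta$, then that it is isomorphic to $\mathsf{C}/\theta_{|\CC}$. Note at the outset that $\mathsf{C}/\theta_{|\CC}$ is a legitimate quotient optiongraph, since $\theta_{|\CC}\in\Con(\mathsf{C})$ by Lemma~\ref{lem:cong on h}. Throughout I would write $[p]_\DD$ for the class of $p$ under $\theta$ and $[p]_\CC$ for its class under $\theta_{|\CC}$, matching the bookkeeping already used in the proof of Lemma~\ref{lem:cong on h}.

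For the first assertion, recall that (as observed just before the definition of suboptiongraph) a subset of positions is a suboptiongraph exactly when it is closed under the ambient option function, so it suffices to check $\Opt_{\DD/\theta}([p]_\DD)\subseteq\tilde{\mathsf{C}}$ for every $[p]_\DD\in\tilde{\mathsf{C}}$. First I would pick $c\in[p]_\DD\cap\mathsf{C}$, so that $[p]_\DD=[c]_\DD$ and hence $\Opt_{\DD/\theta}([p]_\DD)=[\Opt_\DD(c)]_\DD$. Because $\mathsf{C}\leq\mathsf{D}$, the inclusion is option preserving and so $\Opt_\DD(c)=\Opt_\CC(c)\subseteq\mathsf{C}$; thus every option $[q]_\DD$ comes from some $q\in\mathsf{C}$, giving $[q]_\DD\cap\mathsf{C}\neq\emptyset$ and $[q]_\DD\in\tilde{\mathsf{C}}$. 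This is the step where $\mathsf{C}\leq\mathsf{D}$ does the real work.

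For the isomorphism I would define $\varphi\colon\mathsf{C}/\theta_{|\CC}\to\tilde{\mathsf{C}}$ by $\varphi([c]_\CC)=[c]_\DD$. It is well defined because $\theta_{|\CC}\subseteq\theta$, and injective because $(c,c')\in\theta$ with $c,c'\in\mathsf{C}$ forces $(c,c')\in\theta_{|\CC}$; surjectivity is exactly the defining property of $\tilde{\mathsf{C}}$, since any $[p]_\DD\in\tilde{\mathsf{C}}$ meets $\mathsf{C}$ in some $c$ with $\varphi([c]_\CC)=[p]_\DD$. The one genuine computation is option-preservation: the options of $\varphi([c]_\CC)=[c]_\DD$ inside $\tilde{\mathsf{C}}$ are $\Opt_{\DD/\theta}([c]_\DD)=\{[q]_\DD\mid q\in\Opt_\DD(c)\}=\{[q]_\DD\mid q\in\Opt_\CC(c)\}$ (again replacing $\Opt_\DD$ by $\Opt_\CC$ on $\mathsf{C}$), whereas $\varphi(\Opt_{\mathsf{C}/\theta_{|\CC}}([c]_\CC))=\{[q]_\DD\mid q\in\Opt_\CC(c)\}$; these coincide. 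Finally, a bijective option-preserving map has an option-preserving inverse (apply $f^{-1}$ to $\Opt(f(p))=f(\Opt(p))$ and use $f^{-1}(f(S))=S$), so $\varphi$ is an isomorphism.

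I do not expect a serious obstacle: this is the classical Second Isomorphism Theorem transcribed into \textbf{OGph}, and it follows the same pattern as Theorem~\ref{thm:first iso}. The only points demanding care are notational — keeping $\theta$ and $\theta_{|\CC}$ (and their respective classes) apart — and the repeated use of $\mathsf{C}\leq\mathsf{D}$ to substitute $\Opt_\CC$ for $\Opt_\DD$ on elements of $\mathsf{C}$, which is what makes both the closure argument and the option-preservation check succeed. One subtlety worth flagging is that the $\theta$-class of a point of $\mathsf{C}$ may contain points outside $\mathsf{C}$; this is harmless, because injectivity of $\varphi$ only ever compares elements that already lie in $\mathsf{C}$.
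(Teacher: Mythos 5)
Your proof is correct, but it takes a genuinely more hands-on route than the paper. The paper's proof is three lines: restrict the canonical quotient map $p\mapsto[p]:\DD\to\DD/\theta$ to $\CC$; this restriction $f:\CC\to\DD/\theta$ is option preserving (it is the composition of the option-preserving inclusion $\CC\hookrightarrow\DD$ with the quotient map), its image is exactly $\tilde{\mathsf{C}}$, and $\ker(f)=\theta_{|\CC}$; now Theorem~\ref{thm:first iso} delivers both conclusions at once, namely that $f(\CC)=\tilde{\mathsf{C}}$ is a suboptiongraph of $\DD/\theta$ (images of option-preserving maps are suboptiongraphs) and that $\tilde{\mathsf{C}}\cong\CC/\ker(f)=\CC/\theta_{|\CC}$. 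What you did is essentially unfold that argument: your explicit bijection $\varphi([c]_\CC)=[c]_\DD$ is precisely the isomorphism that Theorem~\ref{thm:first iso} produces when applied to $f$, and your closure computation for $\tilde{\mathsf{C}}$ re-proves, in this special case, the general fact (stated just before Theorem~\ref{thm:first iso}) that the image of an option-preserving map is closed under the ambient option function. Your route buys self-containedness and concreteness --- including the worthwhile remark that a bijective option-preserving map has an option-preserving inverse, which the paper leaves implicit --- at the cost of duplicating the work already packaged in the First Isomorphism Theorem; the paper's route is shorter and follows the standard universal-algebra pattern of deriving the Second Isomorphism Theorem from the First. The substance is the same in both: the only place $\CC\le\DD$ is genuinely used is to replace $\Opt_\DD$ by $\Opt_\CC$ on elements of $\CC$, which in the paper is hidden in the claim that the restricted quotient map is option preserving.
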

	
	\begin{proof}
		The restriction $f:\mathsf{C}\to\mathsf{D}/{\theta}$ of the option-preserving quotient map $p\mapsto[p]:\mathsf{D}\to\mathsf{D}/{\theta}$ is also option preserving. Hence the image  $f(\mathsf{C})=\tilde{\mathsf{C}}$ is a suboptiongraph of $\mathsf{D}/{\theta}$. It is clear that $\ker(f)=\theta_{|\CC}$. So the result follows by the First Isomorphism Theorem. 
	\end{proof}
	
	\begin{example}
		\rm 
		Figure~\ref{fig:2ndIso} demonstrates the Second Isomorphism Theorem for an optiongraph $\DD$ with $\CC=\{d,e,g,h,i\}$, $\theta=eg|ihf$, and $\theta_{|\CC}=eg|ih$. 
	\end{example}

	\begin{figure}[h]
		\hfil
		\begin{tikzpicture}[yscale=.7,xscale=1.1]
			\node (a) [small vert] at (0,1.8) {$\scriptstyle a$};
			\node (b) [small vert] at (-1,1) {$\scriptstyle  b$};
			\node (c) [small vert] at (1,1) {$\scriptstyle  c$};
			\node (e) [small vert, text=black] at (0,0) {$\scriptstyle  e$};
			\node (f) [small vert, text=black] at (1,0) {$\scriptstyle f$};
			\node (d) [small vert] at (-1,0) {$\scriptstyle  d$};
			\node (g) [small vert, text=black] at (-1,-1) {$\scriptstyle  g$};
			\node (h) [small vert, text=black] at (1,-1) {$\scriptstyle  h$};
			\node (i) [small vert,text=black] at (0,-1.8) {$\scriptstyle  i$};
			\path [d] (a) to (b);
			\path [d] (a) to (c);
			\path [d] (a) to (e);
			\path [d] (b) to (d);
			\path [d] (b) to (e);
			\path [d] (c) to (f);
			\path [d] (c) to (e);
			\path [d] (d) to (g);
			\path [d] (g) to (i);
			\path [d] (e) to (h);
			\path [d] (e) to (i);
			\path [d,loop right,distance=4mm] (d) to (d);
			
			\begin{pgfonlayer}{background}
				\fill[cyan,opacity=0.3] \convexpath{d,e,h,i,g}{11pt};
			\end{pgfonlayer}
			\node[] at (-1.6,0) {$\mathsf{C}$};
			\node  at (0,-2.8) {$\mathsf{D}$};
		\end{tikzpicture}
		\hfil %%%%%%%%%%%%%%%%%%%%%%%%%%%%%%%%%%%%%%%%%%%%%%%%%%%%%%%%
		\begin{tikzpicture}[yscale=.8,xscale=1.1]
			\node (a) [rect vert] at (0,1.8) {$\scriptstyle \{a\}$};
			\node (b) [rect vert] at (-1,1) {$\scriptstyle  \set{b}$};
			\node (c) [rect vert] at (1,1) {$\scriptstyle  \set{c}$};
			\node (e) [rect vert,fill=orange!50, text=black,minimum size=0.5cm] at (0,-.75) {$\scriptstyle  \set{e,g}$};
			%\node (f) [small vert,fill=pink, text=black,minimum size=0.5cm] at (1,0) {$\scriptstyle  f$};
			\node (d) [rect vert,fill=orange!50] at (-1,0) {$\scriptstyle  \set{d}$};
			%\node (g) [small vert,fill=green, text=black,minimum size=0.5cm] at (-1,-1) {$\scriptstyle  g$};
			\node (h) [rect vert,fill=orange!50, text=black,minimum size=0.5cm] at (1,-1.5) {$\scriptstyle  \set{i,h,f}$};
			%\node (i) [small vert,fill=pink, text=black,minimum size=0.5cm] at (0,-2) {$\scriptstyle  i$};
			\path [d] (a) to (b);
			\path [d] (a) to (c);
			\path [d] (a) to (e);
			\path [d] (b) to (d);
			\path [d] (b) to (e);
			\path [d] (c) to (h);
			\path [d] (c) to (e);
			\path [d] (d) to (e);
			%\path [d] (g) to (i);
			\path [d] (e) to (h);
			%\path [d] (e) to (i);
			\path [d,loop below,distance=6mm] (d) to (d);
			
			% \draw [dashed ,rotate=150,fill=cyan, opacity = 0.2]  (1.35,0.8) arc[start angle=360,end angle=0,x radius=1.8,y radius=1.1];
			\begin{pgfonlayer}{background}
				
			\end{pgfonlayer}
			\node[] at (-1.7,0) {$\tilde{\mathsf{C}}$};
			\node  at (0,-2.2) {$\mathsf{D}/{\theta}$};   
		\end{tikzpicture}
		\hfil %%%%%%%%%%%%%%%%%%%%%%%%%%%%%%%%%%%%%%
		\begin{tikzpicture}[yscale=.8,xscale=1.1]
			% \node (a) [small vert,minimum size=0.5cm] at (0,2) {$\scriptstyle a$};
			%\node (b) [rect vert,minimum size=0.5cm] at (-1,1) {$\scriptstyle  \set{b}$};
			%\node (c) [small vert,minimum size=0.5cm] at (1,1) {$\scriptstyle  c$};
			\node (e) [rect vert,fill=orange!50, text=black] at (0,-.75) {$\scriptstyle  \set{e,g}$};
			%\node (f) [small vert,fill=pink, text=black,minimum size=0.5cm] at (1,0) {$\scriptstyle  f$};
			\node (d) [rect vert,fill=orange!50,minimum size=0.5cm] at (-1,0) {$\scriptstyle  \set{d}$};
			%\node (g) [small vert,fill=green, text=black,minimum size=0.5cm] at (-1,-1) {$\scriptstyle  g$};
			\node (h) [rect vert,fill=orange!50, text=black,minimum size=0.5cm] at (1,-1.5) {$\scriptstyle  \set{i,h}$};
			%\node (i) [small vert,fill=pink, text=black,minimum size=0.5cm] at (0,-2) {$\scriptstyle  i$};
			%\path [d] (a) to (b);
			%\path [d] (a) to (c);
			%\path [d] (a) to (e);
			%\path [d] (b) to (d);
			%\path [d] (b) to (e);
			%\path [d] (c) to (h);
			%\path [d] (c) to (e);
			\path [d] (d) to (e);
			%\path [d] (g) to (i);
			\path [d] (e) to (h);
			\path [d,loop below,distance=6mm] (d) to (d);
			%\path [d] (e) to (i);
			\node  at (0,-2.2) {$\mathsf{C}/{\theta_{|\CC}}$};   
		\end{tikzpicture}
		\caption{\label{fig:2ndIso}
			Second Isomorphism Theorem example.} 
	\end{figure}
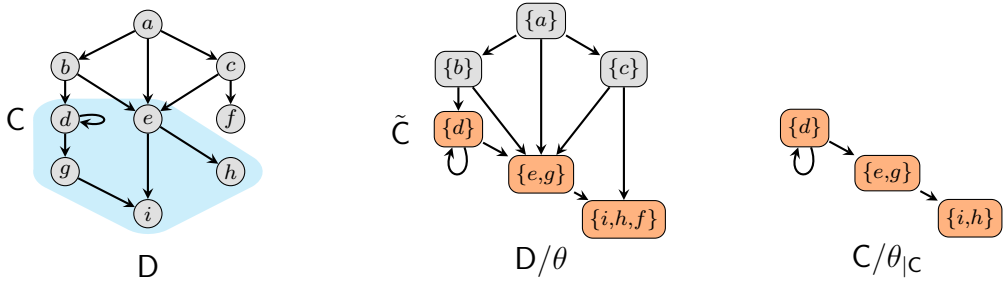
	
	\begin{lemma}
		\label{lem:extension}
		If $\mathsf{C}\leq\mathsf{D}$ and $\theta\in\Con(\CC)$, then the extension $\hat\theta:=\theta\cup\{(p,p)\mid p\in \mathsf{D}\setminus \CC\}\in\Con(\DD)$.
	\end{lemma}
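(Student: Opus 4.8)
The plan is to first confirm that $\hat\theta$ is an equivalence relation on $\DD$, and then verify the congruence condition. For the equivalence relation part, the cleanest route is to describe the induced partition directly: the classes of $\hat\theta$ are precisely the $\theta$-classes of $\CC$ together with the singletons $\{p\}$ for each $p\in\DD\setminus\CC$. Since the $\theta$-classes partition $\CC$ and the singletons partition $\DD\setminus\CC$, these together partition $\DD$, so $\hat\theta$ is an equivalence relation. In particular, for any $s\in\CC$ its $\hat\theta$-class coincides with its $\theta$-class, so that $[S]_{\hat\theta}=[S]_\theta$ for every subset $S\subseteq\CC$; I expect to lean on this identity repeatedly.

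The essential ingredient is the hypothesis $\CC\le\DD$, which says the inclusion map is option preserving and hence $\Opt_\DD(p)=\Opt_\CC(p)\subseteq\CC$ for every $p\in\CC$. This is what keeps the whole computation inside $\CC$, where $\theta$ already controls the bracketed option sets.

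For the congruence condition I would argue by cases on a pair $(p,q)\in\hat\theta$. Since $\theta$ is a relation on $\CC$ and the only extra pairs are diagonal pairs from $\DD\setminus\CC$, a pair in $\hat\theta$ is either a $\theta$-pair with both coordinates in $\CC$, or a diagonal pair $(p,p)$ with $p\notin\CC$; there are no mixed pairs. The diagonal case is immediate, since $p=q$ forces $[\Opt_\DD(p)]_{\hat\theta}=[\Opt_\DD(q)]_{\hat\theta}$. In the remaining case $(p,q)\in\theta$, I would compute, using $\Opt_\DD(p)=\Opt_\CC(p)\subseteq\CC$ together with the identity $[S]_{\hat\theta}=[S]_\theta$ for $S\subseteq\CC$,
\[
[\Opt_\DD(p)]_{\hat\theta}=[\Opt_\CC(p)]_\theta=[\Opt_\CC(q)]_\theta=[\Opt_\DD(q)]_{\hat\theta},
\]
where the middle equality is exactly the congruence property of $\theta$ on $\CC$.

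The proof is essentially routine; the only point requiring care, and the step I would flag as the main obstacle, is making the translation between $\theta$-classes on $\CC$ and $\hat\theta$-classes on $\DD$ airtight. This hinges entirely on the fact that $\CC\le\DD$ forces $\Opt_\DD(p)$ to stay inside $\CC$ for $p\in\CC$: if one forgot this and allowed options to leave $\CC$, the bracketed sets could genuinely differ and the extension would fail to be a congruence.
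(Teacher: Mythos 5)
Your proof is correct and follows essentially the same route as the paper's: reduce to the case of a non-diagonal pair $(p,q)\in\theta$ with $p,q\in\CC$, use $\CC\le\DD$ to keep $\Opt(p)$ inside $\CC$, and conclude via the chain $[\Opt(p)]_{\hat\theta}=[\Opt(p)]_\theta=[\Opt(q)]_\theta=[\Opt(q)]_{\hat\theta}$. The paper's proof is just a terser version of yours, leaving the equivalence-relation check and the class-translation identity implicit.
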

	
	\begin{proof}
		Suppose $p\mathrel{\theta}q$ and $p\ne q$, so that $p,q\in \mathsf{C}$. 
		Then $[\Opt(p)]_{\hat\theta}=[\Opt(p)]_\theta=[\Opt(q)]_\theta=[\Opt(q)]_{\hat\theta}$ since $\CC\le\DD$.
	\end{proof}
	
	\begin{proposition}
		If $\mathsf{C}\leq \mathsf{D}$, then $\CC/{\bowtie_\CC}$ is isomorphic to a suboptiongraph of $\DD/{\bowtie_\DD}$.
	\end{proposition}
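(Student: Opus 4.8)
The plan is to deduce this from the Second Isomorphism Theorem, once we recognize $\bowtie_\CC$ as a restriction of $\bowtie_\DD$. Applying Theorem~\ref{thm:second iso} with $\theta := {\bowtie_\DD}$ shows that $\CC/{(\bowtie_\DD)_{|\CC}}$ is isomorphic to the suboptiongraph $\tilde{\CC}$ of $\DD/{\bowtie_\DD}$. Therefore it suffices to prove the identity ${(\bowtie_\DD)_{|\CC}} = {\bowtie_\CC}$, after which $\CC/{\bowtie_\CC} = \CC/{(\bowtie_\DD)_{|\CC}} \cong \tilde{\CC} \le \DD/{\bowtie_\DD}$ yields the claim immediately.

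For the inclusion ${(\bowtie_\DD)_{|\CC}} \subseteq {\bowtie_\CC}$, I would invoke Lemma~\ref{lem:cong on h}: since ${\bowtie_\DD}\in\Con(\DD)$, its restriction ${(\bowtie_\DD)_{|\CC}}$ is a congruence relation on $\CC$. As ${\bowtie_\CC}$ is the maximum element of $\Con(\CC)$ by Proposition~\ref{prop:bowtie}, every congruence on $\CC$, and in particular ${(\bowtie_\DD)_{|\CC}}$, is contained in ${\bowtie_\CC}$.

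For the reverse inclusion ${\bowtie_\CC} \subseteq {(\bowtie_\DD)_{|\CC}}$, I would use Lemma~\ref{lem:extension}: the extension $\widehat{\bowtie_\CC} = {\bowtie_\CC} \cup \{(p,p) \mid p \in \DD \setminus \CC\}$ is a congruence on $\DD$. By maximality of ${\bowtie_\DD}$ in $\Con(\DD)$, we get $\widehat{\bowtie_\CC} \subseteq {\bowtie_\DD}$. Restricting both sides to $\CC \times \CC$ and using $(\widehat{\bowtie_\CC})_{|\CC} = {\bowtie_\CC}$, which holds since the added pairs lie outside $\CC$, this gives ${\bowtie_\CC} \subseteq {(\bowtie_\DD)_{|\CC}}$. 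Combining the two inclusions produces the desired equality and completes the argument.

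The main obstacle is recognizing that the maximality of $\bowtie$ must be used in \emph{both} directions, paired with the two transfer lemmas: restriction in Lemma~\ref{lem:cong on h} and extension in Lemma~\ref{lem:extension}. The restriction direction is routine; the extension direction is the crux, as it is precisely Lemma~\ref{lem:extension} that prevents ${\bowtie_\CC}$ from being strictly smaller than the restriction of ${\bowtie_\DD}$. Everything else is bookkeeping with restricted equivalence relations.
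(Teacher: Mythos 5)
Your proof is correct and follows essentially the same route as the paper: both establish $({\bowtie_\DD})_{|\CC}={\bowtie_\CC}$ via Lemma~\ref{lem:cong on h} (restriction plus maximality of ${\bowtie_\CC}$) and Lemma~\ref{lem:extension} (extension plus maximality of ${\bowtie_\DD}$), then apply the Second Isomorphism Theorem with $\theta={\bowtie_\DD}$. No gaps; the argument matches the paper's proof step for step.
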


    \begin{proof}
	By Lemma~\ref{lem:cong on h}, $({\bowtie}_\DD)_{|\CC}\subseteq {\bowtie}_{\mathsf{C}}$.	By Lemma~\ref{lem:extension}, ${\bowtie}_{\mathsf{C}}=(\widehat\bowtie_{\CC})_{|\CC}\subseteq{(\bowtie}_\DD)_{|\CC}$. Hence ${\bowtie}_{\mathsf{C}}=({\bowtie}_\DD)_{|\CC}$, and so the Second Isomorphism Theorem applied to ${\bowtie}_\DD$ shows that $\CC/{\bowtie_\CC}=\CC/{(\bowtie_\DD)_{|\CC}}$ is isomorphic to the suboptiongraph $\tilde \CC$ of $\DD/{\bowtie_\DD}$.
	\end{proof}
	
	\section{Third Isomorphism Theorem}\label{sec:third iso thm}
	
Suppose  $\eta$ and $\theta$  are  equivalence relations on a set $A$ such that ${\eta} \subseteq {\theta}$. Recall that ${\theta} / {\eta}:= \{(\v{a}_\eta,\v{b}_\eta)\mid (a,b)\in\theta\}$ defines an equivalence relation on $A/{\eta}$. 
	
	\begin{theorem}[Third Isomorphism]\label{thm:third iso}\rm 
		If $\eta,\theta\in\Con(\DD)$ and ${\eta}\subseteq{\theta}$, then ${\theta}/{\eta}$ is a congruence relation on $\mathsf{D}/{\eta}$ and $\l{\mathsf{D}/{\eta}}/\l{{\theta}/{\eta}}$
		is isomorphic to $\mathsf{D}/{\theta}$.
	\end{theorem}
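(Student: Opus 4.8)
The plan is to obtain both assertions at once from the First Isomorphism Theorem (Theorem~\ref{thm:first iso}) by producing a single option-preserving surjection from $\DD/\eta$ onto $\DD/\theta$ whose kernel is precisely $\theta/\eta$. Since $\eta\subseteq\theta$, I would first define $g:\DD/\eta\to\DD/\theta$ by $g(\v p_\eta):=\v p_\theta$ and check it is well defined: if $\v p_\eta=\v q_\eta$, then $(p,q)\in\eta\subseteq\theta$, so $\v p_\theta=\v q_\theta$. This $g$ is clearly surjective, since every $\v p_\theta$ is the image of $\v p_\eta$.

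Next I would verify that $g$ is option preserving by unwinding the two quotient option functions. On one side,
\[
\Opt_\theta\l{g(\v p_\eta)}=\Opt_\theta(\v p_\theta)=\v{\Opt(p)}_\theta=\{\v r_\theta\mid r\in\Opt(p)\},
\]
while on the other,
\[
g\l{\Opt_\eta(\v p_\eta)}=g\l{\v{\Opt(p)}_\eta}=\{g(\v r_\eta)\mid r\in\Opt(p)\}=\{\v r_\theta\mid r\in\Opt(p)\},
\]
so the two sets coincide and $g$ is a morphism in {\bf OGph}. It then remains to identify the kernel: by definition $(\v p_\eta,\v q_\eta)\in\ker(g)$ iff $\v p_\theta=\v q_\theta$, that is iff $(p,q)\in\theta$, and comparing this with ${\theta}/{\eta}=\{(\v a_\eta,\v b_\eta)\mid(a,b)\in\theta\}$ shows $\ker(g)={\theta}/{\eta}$.

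Applying Theorem~\ref{thm:first iso} to the option-preserving map $g$ now delivers both conclusions simultaneously: it guarantees $\ker(g)={\theta}/{\eta}\in\Con(\DD/\eta)$, which is the congruence claim, and it asserts that the image $g(\DD/\eta)$ is isomorphic to $\l{\DD/\eta}/\ker(g)=\l{\DD/\eta}/\l{{\theta}/{\eta}}$; since $g$ is surjective, $g(\DD/\eta)=\DD/\theta$, giving the desired isomorphism. I expect no genuine obstacle here: the only delicate point is the bookkeeping among the three nested equivalence classes $\v p_\eta$, $\v p_\theta$, and the $\theta/\eta$-class of $\v p_\eta$, and once the notation is pinned down every step above is a direct unwinding of the definitions. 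A pleasant feature of this route is that it sidesteps proving separately that ${\theta}/{\eta}$ is a congruence, as that fact falls out of the kernel computation together with Theorem~\ref{thm:first iso}.
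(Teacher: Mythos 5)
Your proposal is correct and follows essentially the same route as the paper: both define the map $\v{p}_\eta\mapsto\v{p}_\theta$ from $\mathsf{D}/\eta$ to $\mathsf{D}/\theta$, verify it is well defined, surjective, and option preserving by the same unwinding of the quotient option functions, identify its kernel as $\theta/\eta$, and then invoke the First Isomorphism Theorem to obtain both the congruence claim and the isomorphism at once. Your closing observation—that the separate congruence verification is subsumed by the kernel computation—is exactly how the paper's proof is structured as well.
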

	
	\begin{proof}
		Let $f:\mathsf{D}/{\eta}\to\mathsf{D}/{\theta}$ be defined via $f([p]_\eta):=[p]_\theta$.
		Observe that $f$ is well defined since $(p,q)\in\eta$ implies $(p,q)\in\theta$.
		If $p\in\DD$, then 
\begin{align*}
			\Opt_\theta(f([p]_\eta)) 
			&=\Opt_\theta([p]_\theta) 
			=\{[r]_\theta \mid r\in \Opt(p)\}\\ 
			&=\{f([r]_\eta) \mid r\in \Opt(p)\} 
			=f(\{[r]_\eta \mid r\in \Opt(p)\})\\ 
			&=f(\Opt_\eta([p]_\eta)).
		\end{align*}
		Hence $f$ is option preserving. It is clear that $f$ is surjective and $\ker(f)={\theta}/{\eta}$. The result follows by the First Isomorphism Theorem.
	\end{proof}

	\begin{example} \rm
		Figure~\ref{fig:3rdRule} demonstrates the Third Isomorphism Theorem on a rulegraph $\DD$ with $\eta=ef\subseteq cd|efg=\theta$. Note that ${\bowtie}=ab|cd|efg$.
\end{example}

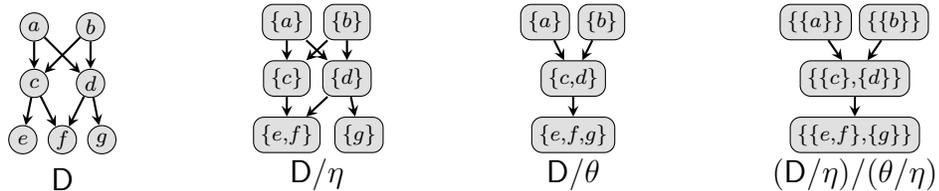
\begin{figure}[h]
	\hfil
	\begin{tikzpicture}[scale=.75]
		\node (02) [small vert] at (0,2) {$\scriptstyle a$};
		\node (12) [small vert] at (1,2) {$\scriptstyle  b$};
		\node (01) [small vert] at (0,1) {$\scriptstyle  c$};
		\node (11) [small vert] at (1,1) {$\scriptstyle  d$};
		\node (00) [small vert] at (-.2,0) {$\scriptstyle  e$};
		\node (10) [small vert] at (.5,0) {$\scriptstyle  f$};
		\node (20) [small vert] at (1.2,0) {$\scriptstyle  g$};
		\path [d] (02) to (01);
		\path [d] (02) to (11);
		\path [d] (12) to (01);
		\path [d] (12) to (11);
		\path [d] (01) to (00);
		\path [d] (01) to (10);
		\path [d] (11) to (10);
		\path [d] (11) to (20);
		\node  at (.5,-.7) {$\DD$};
	\end{tikzpicture}
	\hfil %%%%%%%%%%%%%%%%%%%%%%%%%%%%%%%%
	\begin{tikzpicture}[xscale=.8,yscale=.75]
		\node (02) [rect vert] at (0,2) {$\scriptstyle \{a\}$};
		\node (12) [rect vert] at (1,2) {$\scriptstyle  \{b\}$};
		\node (01) [rect vert] at (0,1) {$\scriptstyle  \{c\}$};
		\node (11) [rect vert] at (1,1) {$\scriptstyle  \{d\}$};
		\node (10) [rect vert] at (0,0) {$\scriptstyle  \{e,f\}$};
		\node (20) [rect vert] at (1.2,0) {$\scriptstyle  \{g\}$};
		\path [d] (02) to (01);
		\path [d] (02) to (11);
		\path [d] (12) to (01);
		\path [d] (12) to (11);
		%\path [d] (01) to (00);
		\path [d] (01) to (10);
		\path [d] (11) to (10);
		\path [d] (11) to (20);
		\node  at (.5,-.7) {$\DD/{\eta}$};
	\end{tikzpicture}
	\hfil %%%%%%%%%%%%%%%%%%%%%%%%%%%%%%%%%%%
	\begin{tikzpicture}[scale=.75]
		\node (02) [rect vert] at (0,2) {$\scriptstyle \{a\}$};
		\node (12) [rect vert] at (1,2) {$\scriptstyle  \{b\}$};
		\node (01) [rect vert] at (0.5,1) {$\scriptstyle  \{c,d\}$};
		\node (10) [rect vert] at (.5,0) {$\scriptstyle  \{e,f,g\}$};
		\path [d] (02) to (01);
		\path [d] (12) to (01);
		\path [d] (01) to (10);
		\node  at (.5,-.7) {$\mathsf{D}/{\theta}$};
	\end{tikzpicture}
	\hfil %%%%%%%%%%%%%%%%%%%%%%%%%%%%%%%%%%
	\begin{tikzpicture}[xscale=.8,yscale=.75]
		\node (02) [rect vert] at (-.15,2) {$\scriptstyle \{\{a\}\}$};
		\node (12) [rect vert] at (1.15,2) {$\scriptstyle  \{\{b\}\}$};
		\node (01) [rect vert] at (0.5,1) {$\scriptstyle  \{\{c\},\{d\}\}$};
		\node (10) [rect vert] at (.5,0) {$\scriptstyle  \{\{e,f\},\{g\}\}$};
		\path [d] (02) to (01);
		\path [d] (12) to (01);
		\path [d] (01) to (10);
		\node  at (.5,-.7) {$(\mathsf{D}/{\eta})/({\theta}/{\eta})$};
	\end{tikzpicture}
	\caption{
		\label{fig:3rdRule}
		Third Isomorphism Theorem example for a rulegraph.}
\end{figure}

\begin{example} \rm
	Figure~\ref{fig:3rdLoopy} demonstrates the Third Isomorphism Theorem on a cyclic optiongraph $\DD$ with $\eta=xy\subseteq xyz|ab=\theta$. Note that $F=\{x,y,z\}$, $M=\{a,b,c\}$, $I=\emptyset$, and ${\bowtie}=abc|xyz$.
\end{example}

\begin{figure}[h]
	\hfil
	\begin{tikzpicture}[xscale=.9,yscale=.9]
		\node (x) [small vert,fill=green!40] at (0,1) {$\scriptstyle x$};
		\node (y) [small vert,fill=green!40] at (0,0) {$\scriptstyle y$};
		\node (a) [small vert] at (1,1) {$\scriptstyle a$};
		\node (b) [small vert] at (1,0) {$\scriptstyle b$};
		\node (c) [small vert] at (2,0.5) {$\scriptstyle  c$};
		\node (z) [small vert,fill=green!40] at (3,0.5) {$\scriptstyle z$};
		\path [d] (a) to (x);
		\path [d] (b) to (y);
		\path [d] (c) to (z);
		\path [d] (a) to (b);
		\path [d] (b) to (a);
		\path [d] (a) to (c);
		\path [d] (b) to (c);
		\path [d,loop above,distance=7mm, looseness=2] ([xshift=-1ex]c.north) to ([xshift=1ex]c.north);
		% \path [d,loop above] (c) to (c);
		% \path [d] (c) to (1.8,1.2) to (2.2,1.2) to (c);
		\node  at (1,-.7) {$\mathsf{D}$};
	\end{tikzpicture}
	\hfil %%%%%%%%%%%%%%%%%%%%%%%%%%%%%%%%%%%%%%%%%%%%%%
	\begin{tikzpicture}[xscale=1.1,yscale=.9]
		\node (xy) [rect vert,fill=green!40] at (0,.5) {$\scriptstyle \{x,y\}$};
		\node (a) [rect vert] at (1,1) {$\scriptstyle \{a\}$};
		\node (b) [rect vert] at (1,0) {$\scriptstyle \{b\}$};
		\node (c) [rect vert] at (2,0.5) {$\scriptstyle  \{c\}$};
		\node (z) [rect vert,fill=green!40] at (3,0.5) {$\scriptstyle \{z\}$};
		\path [d] (a) to (xy);
		\path [d] (b) to (xy);
		\path [d] (c) to (z);
		\path [d] (a) to (b);
		\path [d] (b) to (a);
		\path [d] (a) to (c);
		\path [d] (b) to (c);
		\path [d,loop above,distance=7mm] ([xshift=-1ex]c.north) to ([xshift=1ex]c.north);
		% \path [d] (c) to (1.8,1.2) to (2.2,1.2) to (c);
		\node  at (1.5,-.7) {$\mathsf{D}/{\eta}$};
	\end{tikzpicture}
	\hfil %%%%%%%%%%%%%%%%%%%%%%%%%%%%%%%%%%%%%%%%%%%%%%
	\begin{tikzpicture}[xscale=1.25,yscale=.9]
		\node (xyz) [rect vert,fill=green!40] at (0,.5) {$\scriptstyle \{x,y,z\}$};
		\node (ab) [rect vert] at (1,1) {$\scriptstyle \{a,b\}$};
		\node (c) [rect vert] at (1,0) {$\scriptstyle  \{c\}$};
		\path [d] (ab) to (xyz);
		\path [d] (c) to (xyz);
		\path [d] (ab) to (c);
		\path [d] (ab) to (c);
		\path [d,loop right,distance=7mm] (ab) to (ab);
		% \path [d] (ab) to (1.8,1.2) to (1.8,.8) to (ab);
		% \path [d,out=25,in=-25,looseness=6] ([yshift=-1ex]c.east) to ([yshift=1ex]c.east);
		\path [d,loop right,distance=7mm] (c) to (c);
		% \path [d] (c) to (1.8,0.2) to (1.8,-.2) to (c);
		\node  at (1,-.7) {$\mathsf{D}/{\theta}$};
	\end{tikzpicture}
	\hfil %%%%%%%%%%%%%%%%%%%%%%%%%%%%%%%%%%%%%%%%%%%%%%
	\begin{tikzpicture}[xscale=1.6,yscale=.9]
		\node (xyz) [rect vert,fill=green!40] at (-.15,.5) {$\scriptstyle \{\{x,y\},\{z\}\}$};
		\node (ab) [rect vert] at (1,1) {$\scriptstyle \{\{a\},\{b\}\}$};
		\node (c) [rect vert] at (1,0) {$\scriptstyle  \{\{c\}\}$};
		\path [d] (ab) to (xyz);
		\path [d] (c) to (xyz);
		\path [d] (ab) to (c);
		\path [d] (ab) to (c);
		\path [d,loop right,distance=7mm] (ab) to (ab);
		% \path [d] (ab) to (1.8,1.2) to (1.8,.8) to (ab);
		\path [d,loop right,distance=7mm] (c) to (c);
		% \path [d] (c) to (1.8,0.2) to (1.8,-.2) to (c);
		\node  at (.5,-.7) {$(\mathsf{D}/{\eta})/({\theta}/{\eta})$};
	\end{tikzpicture}
	\caption{\label{fig:3rdLoopy}
		Third Isomorphism Theorem example for a cyclic optiongraph.}
\end{figure}
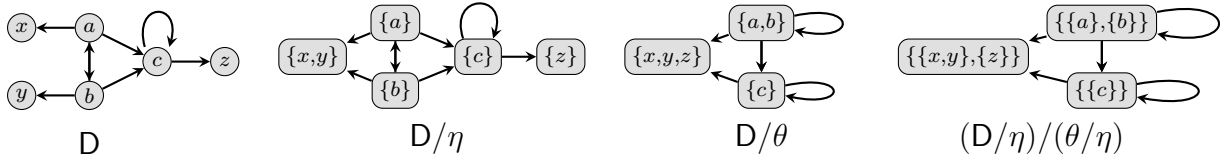

\section{Fourth Isomorphism Theorem}\label{sec:fourth iso thm}

The following result for rulegraphs originally appeared in~\cite{Basic2024}, but the proof was left to the reader. We state this result for optiongraphs and include the proof.
\begin{theorem}[Fourth Isomorphism]\label{thm:fourth iso}\rm 
	If $\mathsf{D}$ is an optiongraph and $\theta\in\Con(\mathsf{D})$, then the interval $[\theta,\bowtie]=\{{\phi}\in\Con(\mathsf{D})\mid \theta\subseteq \phi \}$ is a sublattice of $\Con(\mathsf{D})$ and
	\[
	\alpha:[\theta,\bowtie]\to\Con(\mathsf{D}/\theta) 
	\]
	defined by $\alpha(\phi):=\phi/\theta$ is a lattice isomorphism.
\end{theorem}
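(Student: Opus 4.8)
The plan is to exhibit $\alpha$ as an order isomorphism with an explicitly constructed inverse, and then upgrade this to a lattice isomorphism using that both posets are complete lattices. First I would verify that $\alpha$ is well defined. For $\phi\in[\theta,\bowtie]$ we have $\theta\subseteq\phi$, so the Third Isomorphism Theorem (Theorem~\ref{thm:third iso}), applied with the role of $\eta$ played by $\theta$, guarantees that $\phi/\theta$ is a congruence relation on $\mathsf{D}/\theta$; hence $\alpha(\phi)=\phi/\theta\in\Con(\mathsf{D}/\theta)$. Monotonicity of $\alpha$ is immediate from the definition of $\phi/\theta$: if $\phi_1\subseteq\phi_2$ then $\phi_1/\theta\subseteq\phi_2/\theta$.

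Next I would construct the candidate inverse. Let $\pi:\mathsf{D}\to\mathsf{D}/\theta$, $p\mapsto[p]_\theta$, be the canonical quotient map, which is option preserving as shown in Section~\ref{sec:quotients}. For $\psi\in\Con(\mathsf{D}/\theta)$ define $\beta(\psi):=(\pi\times\pi)^{-1}(\psi)=\{(p,q)\mid([p]_\theta,[q]_\theta)\in\psi\}$. The preimage of an equivalence relation under a map is an equivalence relation, and $\beta(\psi)\supseteq\theta$ since $(p,q)\in\theta$ forces $[p]_\theta=[q]_\theta$, whence reflexivity of $\psi$ gives $([p]_\theta,[q]_\theta)\in\psi$. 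The crux is to check that $\mu:=\beta(\psi)$ satisfies the congruence condition. The key observation is that $s\in[r]_\mu$ if and only if $[s]_\theta\in[[r]_\theta]_\psi$, so the assignment $[r]_\mu\mapsto[[r]_\theta]_\psi$ is a well-defined bijection from $\mu$-classes to $\psi$-classes. Since $\pi$ is option preserving we have $\Opt_\theta([p]_\theta)=[\Opt(p)]_\theta$, so if $(p,q)\in\mu$ then the congruence property of $\psi$ yields $[\Opt_\theta([p]_\theta)]_\psi=[\Opt_\theta([q]_\theta)]_\psi$; transporting this equality back along the bijection gives $[\Opt(p)]_\mu=[\Opt(q)]_\mu$, as required. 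This is the step I expect to be the main obstacle, since it requires threading the congruence condition cleanly through three nested layers of equivalence classes.

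Finally I would verify that $\alpha$ and $\beta$ are mutually inverse. The identity $\alpha(\beta(\psi))=\psi$ holds because every element of $\mathsf{D}/\theta$ has the form $[p]_\theta$, so $\beta(\psi)/\theta$ recovers $\psi$ verbatim. The identity $\beta(\alpha(\phi))=\phi$ is where the hypothesis $\theta\subseteq\phi$ is essential: if $([p]_\theta,[q]_\theta)\in\phi/\theta$, then for suitable representatives $(p,p'),(q,q')\in\theta\subseteq\phi$, so transitivity of $\phi$ forces $(p,q)\in\phi$, giving $\beta(\alpha(\phi))\subseteq\phi$; the reverse inclusion is immediate. Since $\beta$ is plainly monotone, $\alpha$ is an order isomorphism.

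To conclude, I would note that $[\theta,\bowtie]$ is an interval in the complete lattice $\Con(\mathsf{D})$ (Proposition~\ref{prop:complete lattice}): for $\phi_1,\phi_2\supseteq\theta$ the ambient meet $\phi_1\cap\phi_2$ and the ambient join $\phi_1\vee\phi_2$ both contain $\theta$ and lie below $\bowtie$, so $[\theta,\bowtie]$ is closed under the operations of $\Con(\mathsf{D})$ and is therefore a sublattice. Because meets and joins in a lattice are determined purely order-theoretically, any order isomorphism between lattices automatically preserves them; hence the order isomorphism $\alpha$ between the lattices $[\theta,\bowtie]$ and $\Con(\mathsf{D}/\theta)$ is a lattice isomorphism.
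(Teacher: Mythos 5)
Your proposal is correct, and it follows broadly the same outline as the paper's proof---both ultimately invert $\alpha$ via the same relation $\{(p,q)\mid([p]_\theta,[q]_\theta)\in\psi\}$---but it differs in how the key fact is established, namely that this relation is a congruence on $\mathsf{D}$. The paper realizes it as $\ker(f_\psi\circ f_\theta)$, the kernel of the composition of the two canonical quotient maps $f_\theta:\mathsf{D}\to\mathsf{D}/\theta$ and $f_\psi:\mathsf{D}/\theta\to(\mathsf{D}/\theta)/\psi$, so the First Isomorphism Theorem delivers the congruence property with no computation, and surjectivity of $\alpha$ follows at once. You instead verify the congruence condition by hand, using the well-defined injective transport $[r]_\mu\mapsto[[r]_\theta]_\psi$ from $\mu$-classes to $\psi$-classes to pull the equality $[\Opt_\theta([p]_\theta)]_\psi=[\Opt_\theta([q]_\theta)]_\psi$ back to $[\Opt(p)]_\mu=[\Opt(q)]_\mu$; this argument is sound (well-definedness and injectivity of the transport are exactly what the set equality needs) and has the merit of being self-contained, whereas the paper's route is shorter and exhibits the isomorphism theorems reinforcing one another, which is in the spirit of the paper. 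The remaining differences are cosmetic: you package bijectivity as a two-sided monotone inverse $\beta$, while the paper checks injectivity (using $\theta\subseteq\phi$, just as your computation of $\beta(\alpha(\phi))=\phi$ does) and surjectivity separately; your explicit closure argument for the interval expands the paper's one-line remark that every interval of a lattice is a sublattice; and both proofs conclude by noting that an order isomorphism of lattices is automatically a lattice isomorphism.
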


\begin{proof}
	Every interval of a lattice is a sublattice, so $[\theta,\bowtie]$ is a sublattice. To show that $\alpha$ is injective, assume that $\alpha(\phi)=\alpha(\psi)$, so that  $\phi/\theta=\psi/\theta$. Then $\phi=\psi$ since
	\[
	(p,q)\in\phi \Leftrightarrow ([p]_\theta,[q]_\theta)\in\phi/\theta \Leftrightarrow ([p]_\theta,[q]_\theta)\in\psi/\theta \Leftrightarrow (p,q)\in\psi.
	\]
	
	To show that $\alpha$ is surjective, suppose $\psi\in \Con(\mathsf{D}/\theta)$. 
	Define the canonical quotient maps $f_\theta:\mathsf{D}\to \mathsf{D}/\theta$ and $f_\psi: \mathsf{D}/\theta\to  (\mathsf{D}/\theta)/\psi$, and let $\phi:=\ker(f_\psi\circ f_\theta)$. So $\phi$ is a congruence relation on $\mathsf{D}$ by Theorem~\ref{thm:first iso}. It is easy to see that $\phi=\{(p,q)\mid ([p]_\theta,[q]_\theta)\in \psi\}$ and $\theta\subseteq\phi$. Then for $p,q\in \DD$,
	\[
	([p]_\theta,[q]_\theta)\in\phi/\theta \Leftrightarrow (p,q)\in \phi \Leftrightarrow ([p]_\theta,[q]_\theta)\in\psi.   
	\]
	So $\psi=\phi/\theta=\alpha(\phi)$.
	
	Finally, $\alpha$ is an isomorphism since
	\[
	\phi\subseteq\psi \Leftrightarrow \phi/\theta\subseteq\psi/\theta\Leftrightarrow\alpha(\phi)\subseteq \alpha(\psi)
	\]
	for all $\phi,\psi\in[\theta,\bowtie]$.
\end{proof}

\begin{example} \rm
	Figure~\ref{fig:4thIsoLoopy} demonstrates the Fourth Isomorphism Theorem on a cyclic optiongraph $\mathsf{D}$ with $\theta=abc$. Note that $\mathsf{I}_\DD=\DD$ and ${\bowtie}=abcxy$ identifies all positions.
\end{example}

\begin{figure}[h]
	\hfil
	\begin{tikzpicture}[xscale=1.1]
		\node (x) [small vert] at (1,1) {$\scriptstyle x$};
		\node (a) [small vert] at (1,0) {$\scriptstyle a$};
		\node (b) [small vert] at (2,0) {$\scriptstyle b$};
		\node (y) [small vert] at (2,1) {$\scriptstyle  y$};
		\node (c) [small vert] at (1.5,1) {$\scriptstyle  c$};
		\path [d] (x) to (a);
		\path [d] (b) to (c);
		\path [d] (c) to (a);
		\path [d] (a) to (b);
		\path [d] (y) to (b);
		\node  at (1.5,-.7) {$\mathsf{D}$};
	\end{tikzpicture}
	\hfil %%%%%%%%%%%%%%%%%%%%%%%%%%%%%%%%%%%%%%%%%%%%%%%%%%%%%%%%%%%%%%
	\begin{tikzpicture}[xscale=1,yscale=.65]
		\node (abcxy) [rect vert, fill=orange!50] at (-.5,3) {$\scriptstyle abcxy$};
		\node (ay cx) [rect vert] at (-2,2) {$\scriptstyle ay|cx$};
		\node (abcx) [rect vert, fill=orange!50] at (-1,2) {$\scriptstyle abcx$};
		\node (abcy) [rect vert, fill=orange!50] at (0,2) {$\scriptstyle abcy$};
		\node (abc xy) [rect vert,fill=orange!50] at (1,2) {$\scriptstyle  abc|xy$};
		\node (abc) [rect vert,fill=orange!50] at (0,1) {$\scriptstyle abc $};
		\node (ay) [rect vert] at (-1,1) {$\scriptstyle ay $};
		\node (cx) [rect vert] at (-2,1) {$\scriptstyle cx $};
		\node (none) [rect vert] at (-1,0) {$\scriptstyle  $};
		\path [d,-] (abcxy) to (ay cx);
		\path [d,-] (abcxy) to (abcx);
		\path [d,-] (abcxy) to (abcy);
		\path [d,-] (abcxy) to (abc xy);
		\path [d,-] (abcx) to (abc);
		\path [d,-] (abcy) to (abc);
		\path [d,-] (abc xy) to (abc);
		\path [d,-] (ay cx) to (ay);
		\path [d,-] (abcx) to (cx);
		\path [d,-] (ay cx) to (cx);
		\path [d,-] (abcy) to (ay);
		\path [d,-] (cx) to (none);
		\path [d,-] (ay) to (none);
		\path [d,-] (abc) to (none);
		\node  at (-0.5,-.8) {$\Con(\mathsf{D})$};
	\end{tikzpicture}
	\hfil %%%%%%%%%%%%%%%%%%%%%%%%%%%%%%%%%%%%%%%%%%%%%%%%%%%%%%%%%%%%%%
	\begin{tikzpicture}[xscale=.9]
		\node (x) [rect vert] at (0,1) {$\scriptstyle \{x\}$};
		\node (y) [rect vert] at (2,1) {$\scriptstyle  \{y\}$};
		\node (abc) [rect vert] at (1,0) {$\scriptstyle  \{a,b,c\}$};
		\path [d] (x) to (abc);
		\path [d] (y) to (abc);
		% \path [d] (abc) to (.8,1) to (1.2,1) to (abc);
		\path [d,in=110,out=70,loop,looseness=12] (abc) to (abc);
		\node  at (1,-.7) {$\mathsf{D}/\theta$};
	\end{tikzpicture}
	\hfil %%%%%%%%%%%%%%%%%%%%%%%%%%%%%%%%%%%%%%%%%%%%%%%%%%%%%%%%%%%%%%
	\begin{tikzpicture}[xscale=1.5,yscale=.65]
		\node (abc x y) [rect vert, fill=orange!50] at (0,3) {$\scriptstyle \{a,b,c\}\{x\}\{y\}$};
		\node (abc x) [rect vert, fill=orange!50] at (-1,2) {$\scriptstyle \{a,b,c\}\{x\}$};
		\node (abc y) [rect vert, fill=orange!50] at (1,2) {$\scriptstyle \{a,b,c\}\{y\}$};
		\node (x y) [rect vert,fill=orange!50] at (0,2) {$\scriptstyle  \{x\}\{y\}$};
		\node (id) [rect vert,fill=orange!50] at (0,1) {$\scriptstyle  $};
		\path [d,-] (abc x y) to (abc x);
		\path [d,-] (abc x y) to (abc y);
		\path [d,-] (abc x y) to (x y);
		\path [d,-] (abc x) to (id);
		\path [d,-] (abc y) to (id);
		\path [d,-] (x y) to (id);
		\node  at (0,-.8) {$\Con(\mathsf{D}/\theta)$};
	\end{tikzpicture}
	
	\caption{
		\label{fig:4thIsoLoopy}
		Fourth Isomorphism Theorem example for a cyclic optiongraph. The highlighted portion of the second diagram is the interval $[\theta,\bowtie]$.}
\end{figure}

\begin{example} \rm
	Figure~\ref{fig:4thIsoInfinite} demonstrates the Fourth Isomorphism Theorem on an infinite rulegraph $\DD$ with $\theta=01|23$. Note that  ${\bowtie}=01|23|45|\ldots$ and $\mathsf{F}_\DD=\DD$. 
\end{example}

\begin{figure}[h]
	\hfil
	\begin{tikzpicture}[scale=1]
		\node (03) at (0,3) {$\scriptstyle \vdots$};
		\node (13) at (1,3) {$\vdots$};
		\node (02) [small vert] at (0,2) {$\scriptstyle 4$};
		\node (12) [small vert] at (1,2) {$\scriptstyle  5$};
		\node (01) [small vert] at (0,1) {$\scriptstyle  2$};
		\node (11) [small vert] at (1,1) {$\scriptstyle  3$};
		\node (00) [small vert] at (0,0) {$\scriptstyle  0$};
		\node (10) [small vert] at (1,0) {$\scriptstyle  1$};
		
		\path [d] (02) to (01);
		\path [d] (02) to (11);
		\path [d] (12) to (11);
		\path [d] (01) to (00);
		\path [d] (11) to (10);
		\path [d] (11) to (00);
		\path [d] (03) to (02);
		\path [d] (13) to (12);
		\path [d] (13) to (02);
		\node  at (0.5,-.6) {$\DD$};
	\end{tikzpicture}
	%%%%%%%%%%%%%%%%%%%%%%%%%%%
	\hfil
	\begin{tikzpicture}[scale=.6]
		\node (all) [rect vert, fill=orange!50] at (0,4) {$\scriptstyle 01|23|45|\ldots$};
		\node (vdots) at (0,3) {$\phantom{f} $};
		\node at (0,3.15) {$\scriptstyle \vdots$};
		\node (c01-23-45) [rect vert, fill=orange!50] at (0,2) {$\scriptstyle  01|23|45$};
		\node (c01-23) [rect vert,fill=orange!50] at (0,1) {$\scriptstyle  01|23$};
		\node (c01) [rect vert] at (0,0) {$\scriptstyle  01$};
		\node (id0) [rect vert] at (0,-1) {$\scriptstyle  $};
		
		% \path [d,-] (all) to (vdots);
		\path [d,-] (vdots) to (c01-23-45);
		\path [d,-] (c01-23-45) to (c01-23);
		\path [d,-] (c01-23) to (c01);
		\path [d,-] (c01) to (id0);
		\node  at (0,-1.8) {$\Con(\DD)$};
	\end{tikzpicture}
	%%%%%%%%%%%%%%%%%%%%%%%%%%%%
	\hfil
	\begin{tikzpicture}[scale=1]
		\node (03)  at (0,3) {$\scriptstyle \vdots$};
		\node (13)  at (1,3) {$\vdots$};
		\node (02) [rect vert] at (0,2) {$\scriptstyle \{4\}$};
		\node (12) [rect vert] at (1,2) {$\scriptstyle  \{5\}$};
		\node (05 1)[rect vert] at (0.5,1) {$\scriptstyle  \{2,3\}$};
		\node (05 0) [rect vert] at (0.5,0) {$\scriptstyle  \{0,1\}$};
		
		\path [d] (02) to (05 1);
		\path [d] (05 1) to (05 0);
		\path [d] (12) to (05 1);
		\path [d] (03) to (02);
		\path [d] (13) to (12);
		\path [d] (13) to (02);
		
		\node  at (0.5,-.6) {$\DD/\theta$};
	\end{tikzpicture}
	%%%%%%%%%%%%%%%%%%%%%%%%%%%%%%%
	\hfil
	\begin{tikzpicture}[scale=.6]
		\node (all) [rect vert, fill=orange!50] at (0,4) {$\scriptstyle \{4\}\{5\}|\ldots$};
		\node (vdots) at (0,3) {$\phantom{f} $};
		\node at (0,3.15) {$\scriptstyle \vdots$};
		\node (c45) [rect vert, fill=orange!50] at (0,2) {$\scriptstyle  \{4\}\{5\}$};
		\node (id0) [rect vert,fill=orange!50] at (0,1) {$\scriptstyle  $};
		% \path [d,-] (all) to (vdots);
		\path [d,-] (vdots) to (c01-23-45);
		\path [d,-] (c45) to (id0);
		\node  at (0,-1.8) {$\Con(\DD/\theta)$};
	\end{tikzpicture}
	\caption{
		\label{fig:4thIsoInfinite}
		Fourth Isomorphism Theorem example for an infinite rulegraph.
	}
\end{figure}
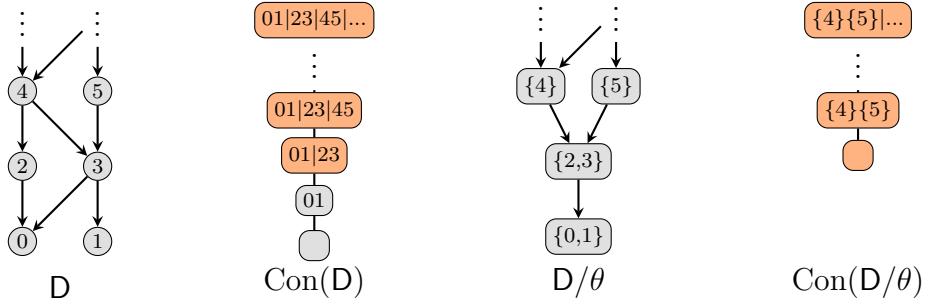

\section{Applications}
\label{sec:applications}

\subsection{Subcategories} 

Rulegraphs form a full subcategory {\bf RGph} of {\bf OGph} by~\cite[Proposition~4.22]{Basic2024}. Optiongraphs with an infinite play also form a full subcategory of {\bf OGph} because the image of such an optiongraph through an option-preserving map has an infinite play, as well. A consequence is that the four isomorphism theorems for {\bf OGph} also hold in both of these subcategories.

\subsection{Simplicity}\label{sec:simplicity}

The maximum element of $\Con(\DD)$ is $\bowtie$ while the minimum is the trivial equivalence relation. An optiongraph is \emph{simple} if $\bowtie$ is trivial, that is, $\DD$ has only the trivial congruence relation. In this case, $\Con(\DD)$ is trivial. The reader should not confuse our notion of simple with the graph-theoretic notion.

\begin{proposition}
The minimum quotient $\mathsf{D}/{\bowtie}$ is the unique simple quotient of the optiongraph $\mathsf{D}$.
\end{proposition}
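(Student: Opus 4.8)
The plan is to reduce everything to the Fourth Isomorphism Theorem (Theorem~\ref{thm:fourth iso}). Recall that a quotient of $\mathsf{D}$ is an optiongraph of the form $\mathsf{D}/\theta$ with $\theta\in\Con(\mathsf{D})$, and that such a quotient is simple exactly when its own congruence lattice $\Con(\mathsf{D}/\theta)$ consists of the trivial relation alone. Since the minimum of any congruence lattice is the trivial relation and its maximum is $\bowtie$, this is equivalent to $\Con(\mathsf{D}/\theta)$ being a one-element lattice. So the single observation driving the proof is that simplicity of $\mathsf{D}/\theta$ is the same as $\Con(\mathsf{D}/\theta)$ being a singleton.

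First I would verify that $\mathsf{D}/{\bowtie}$ is simple. Apply Theorem~\ref{thm:fourth iso} with $\theta={\bowtie}$. The interval $[\bowtie,\bowtie]=\{\bowtie\}$ is a singleton, and the theorem furnishes a lattice isomorphism $\alpha:[\bowtie,\bowtie]\to\Con(\mathsf{D}/{\bowtie})$. Hence $\Con(\mathsf{D}/{\bowtie})$ is a one-element lattice, so by the reformulation above $\mathsf{D}/{\bowtie}$ is simple.

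For uniqueness, suppose $\mathsf{D}/\theta$ is simple for some $\theta\in\Con(\mathsf{D})$. Applying Theorem~\ref{thm:fourth iso} again, $\alpha:[\theta,\bowtie]\to\Con(\mathsf{D}/\theta)$ is a lattice isomorphism. Simplicity of $\mathsf{D}/\theta$ forces $\Con(\mathsf{D}/\theta)$ to be a singleton, so the interval $[\theta,\bowtie]$ must also be a singleton. As $\theta$ is the bottom and $\bowtie$ the top of this interval, we conclude $\theta={\bowtie}$, whence $\mathsf{D}/\theta=\mathsf{D}/{\bowtie}$. Thus $\mathsf{D}/{\bowtie}$ is the only simple quotient of $\mathsf{D}$.

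I do not expect a serious obstacle here: the whole argument is the equivalence between simplicity of $\mathsf{D}/\theta$ and the interval $[\theta,\bowtie]$ being trivial, which immediately pins down $\theta={\bowtie}$. The only point worth stating carefully is the passage between \emph{simple} (trivial congruence lattice) and \emph{one-element congruence lattice}, which rests on the fact that every congruence lattice has the trivial relation as its bottom element, so that a one-element lattice is necessarily the trivial one.
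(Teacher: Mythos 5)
Your proof is correct and follows the same route as the paper: both arguments reduce simplicity of $\mathsf{D}/\theta$ to triviality of $\Con(\mathsf{D}/\theta)$ and then invoke the Fourth Isomorphism Theorem's lattice isomorphism $[\theta,\bowtie]\cong\Con(\mathsf{D}/\theta)$ to conclude this happens exactly when $\theta={\bowtie}$. You merely spell out the singleton-interval bookkeeping that the paper leaves implicit.
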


\begin{proof}
	The quotient $\mathsf{D}/\theta$ is simple if and only if $\Con(\mathsf{D}/\theta)$ is trivial. By the Fourth Isomorphism Theorem, this happens exactly when $\theta={\bowtie}$. 
\end{proof}

\begin{example}
\rm
There are 2 simple optiongraphs with 1 position and 3 with 2 positions. The 15 simple optiongraphs with 3 positions are shown in  Figure~\ref{fig:simpleSize3}. Observe that it is possible for a simple cyclic optiongraph to be disconnected. This does not happen in the case of rulegraphs since all terminal positions are identified in the minimum quotient. Computer calculations using \cite{digraphs} show that there are 289, 19787, and 4537065 simple optiongraphs with 4, 5, and 6 positions, respectively. The sequence that counts simple optiongraphs with $n$ positions is new to the OEIS~\cite{oeis}.
\end{example}

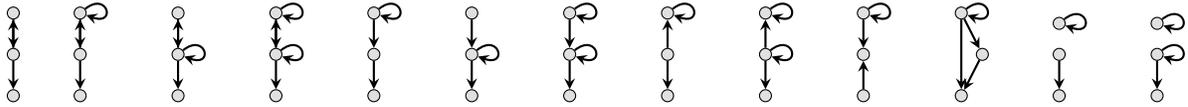
\begin{figure}[h]
	\hfil
	\begin{tikzpicture}[yscale=.52,rotate=-90]
		\node (a) [tiny vert] at (0,0) {};
		\node (b) [tiny vert] at (1,0) {};
		\node (c) [tiny vert] at (2,0) {};
		\path [d] (a) to (b);
		
		\path [d] (b) to (c);
	\end{tikzpicture}
	\hfil
	%%%%%%%%%%%%%%%%%%%%%%l
	\begin{tikzpicture}[yscale=.52,rotate=-90]
		\node (a) [tiny vert] at (0,0) {};
		\node (b) [tiny vert] at (1,0.28) {};
		\node (c) [tiny vert] at (2,0) {};
		\path [d] (a) to (b);
		\path [d] (b) to (c);
		\path [d] (a) to (c);
	\end{tikzpicture}
	\hfil
	%%%%%%%%%%%%%%%%%%%%%%
	\begin{tikzpicture}[yscale=.52,rotate=-90]
		\node (a) [tiny vert] at (0,0) {};
		\node (b) [tiny vert] at (1,0) {};
		\node (c) [tiny vert] at (2,0) {};
		\path [d] (a) to (b);
		\path [d] (b) to (a);
		\path [d] (b) to (c);
	\end{tikzpicture}
	\hfil
	%%%%%%%%%%%%%%%%%%%%%%
	\begin{tikzpicture}[yscale=.52,rotate=-90]
		\node (a) [tiny vert] at (0,0) {};
		\node (b) [tiny vert] at (1,0) {};
		\node (c) [tiny vert] at (2,0) {};
		\path [d] (a) to (b);
		\path [d] (b) to (a);
		\path [d] (b) to (c);
		\path [d,loop above,distance=6mm,in=45,out=135] (a) to (a);
	\end{tikzpicture}
	\hfil
	%%%%%%%%%%%%%%%%%%%%%%
	\begin{tikzpicture}[yscale=.52,rotate=-90]
		\node (a) [tiny vert] at (0,0) {};
		\node (b) [tiny vert] at (1,0) {};
		\node (c) [tiny vert] at (2,0) {};
		\path [d] (a) to (b);
		\path [d] (b) to (a);
		\path [d] (b) to (c);
		\path [d,loop above,distance=6mm,in=45,out=135] (b) to (b);
	\end{tikzpicture}
	\hfil
	%%%%%%%%%%%%%%%%%%%%%%
	\begin{tikzpicture}[yscale=.52,rotate=-90]
		\node (a) [tiny vert] at (0,0) {};
		\node (b) [tiny vert] at (1,0) {};
		\node (c) [tiny vert] at (2,0) {};
		\path [d] (a) to (b);
		\path [d] (b) to (a);
		\path [d] (b) to (c);
		\path [d,loop above,distance=6mm,in=45,out=135] (a) to (a);
		\path [d,loop above,distance=6mm,in=45,out=135] (b) to (b);
	\end{tikzpicture}
	\hfil
	%%%%%%%%%%%%%%%%%%%%%%
	\begin{tikzpicture}[yscale=.52,rotate=-90]
		\node (a) [tiny vert] at (0,0) {};
		\node (b) [tiny vert] at (1,0) {};
		\node (c) [tiny vert] at (2,0) {};
		\path [d] (a) to (b);
		\path [d] (b) to (c);
		\path [d,loop above,distance=6mm,in=45,out=135] (a) to (a);
	\end{tikzpicture}
	\hfil
	%%%%%%%%%%%%%%%%%%%%%%
	\begin{tikzpicture}[yscale=.52,rotate=-90]
		\node (a) [tiny vert] at (0,0) {};
		\node (b) [tiny vert] at (1,0) {};
		\node (c) [tiny vert] at (2,0) {};
		\path [d] (a) to (b);
		\path [d] (b) to (c);
		\path [d,loop above,distance=6mm,in=45,out=135] (b) to (b);
	\end{tikzpicture}
	\hfil
	%%%%%%%%%%%%%%%%%%%%%%
	\begin{tikzpicture}[yscale=.52,rotate=-90]
		\node (a) [tiny vert] at (0,0) {};
		\node (b) [tiny vert] at (1,0) {};
		\node (c) [tiny vert] at (2,0) {};
		\path [d] (a) to (b);
		\path [d] (b) to (c);
		\path [d,loop above,distance=6mm,in=45,out=135] (a) to (a);
		\path [d,loop above,distance=6mm,in=45,out=135] (b) to (b);
	\end{tikzpicture}
	\hfil
	%%%%%%%%%%%%%%%%%%%%%%
	\begin{tikzpicture}[yscale=.52,rotate=-90]
		\node (a) [tiny vert] at (0,0) {};
		\node (b) [tiny vert] at (1,0) {};
		\node (c) [tiny vert] at (2,0) {};
		\path [d] (b) to (a);
		\path [d] (b) to (c);
		\path [d,loop above,distance=6mm,in=45,out=135] (a) to (a);
	\end{tikzpicture}
	\hfil
	%%%%%%%%%%%%%%%%%%%%%%
	\begin{tikzpicture}[yscale=.52,rotate=-90]
		\node (a) [tiny vert] at (0,0) {};
		\node (b) [tiny vert] at (1,0) {};
		\node (c) [tiny vert] at (2,0) {};
		\path [d] (b) to (a);
		\path [d] (b) to (c);
		\path [d,loop above,distance=6mm,in=45,out=135] (a) to (a);
		\path [d,loop above,distance=6mm,in=45,out=135] (b) to (b);
	\end{tikzpicture}
	\hfil
	%%%%%%%%%%%%%%%%%%%%%%
	\begin{tikzpicture}[yscale=.52,rotate=-90]
		\node (a) [tiny vert] at (0,0) {};
		\node (b) [tiny vert] at (1,0) {};
		\node (c) [tiny vert] at (2,0) {};
		\path [d] (a) to (b);
		\path [d] (c) to (b);
		\path [d,loop above,distance=6mm,in=45,out=135] (a) to (a);
	\end{tikzpicture}
	\hfil
	%%%%%%%%%%%%%%%%%%%%%%
	\begin{tikzpicture}[yscale=.52,rotate=-90]
		\node (a) [tiny vert] at (0,0) {};
		\node (b) [tiny vert] at (1,0.28) {};
		\node (c) [tiny vert] at (2,0) {};
		\path [d] (a) to (b);
		\path [d] (b) to (c);
		\path [d] (a) to (c);
		\path [d,loop above,distance=6mm,in=45,out=135] (a) to (a);
	\end{tikzpicture}
	\hfil
	%%%%%%%%%%%%%%%%%%%%%%
	\begin{tikzpicture}[yscale=.52,rotate=-90]
		\node (a) [tiny vert] at (0.25,0) {};
		\node (b) [tiny vert] at (1,0) {};
		\node (c) [tiny vert] at (2,0) {};
		\path [d] (b) to (c);
		\path [d,loop above,distance=6mm,in=45,out=135] (a) to (a);
	\end{tikzpicture}
	\hfil
	%%%%%%%%%%%%%%%%%%%%%%
	\begin{tikzpicture}[yscale=.52,rotate=-90]
		\node (a) [tiny vert] at (0.25,0) {};
		\node (b) [tiny vert] at (1,0) {};
		\node (c) [tiny vert] at (2,0) {};
		\path [d] (b) to (c);
		\path [d,loop above,distance=6mm,in=45,out=135] (a) to (a);
		\path [d,loop above,distance=6mm,in=45,out=135] (b) to (b);
	\end{tikzpicture}
	
	\caption{
		\label{fig:simpleSize3}
		All  simple optiongraphs with 3 positions.
	}
\end{figure}

\begin{question}\rm
	Is there a reasonable enumeration of simple optiongraphs with $n$ positions?
\end{question}

\begin{proposition} \label{prop:min quot of a quot}
	If $\theta\in\Con(\DD)$, then the minimum quotient of $\mathsf{D}$ is isomorphic to the minimum quotient of $\mathsf{D}/{\theta}$.
\end{proposition}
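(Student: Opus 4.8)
The plan is to derive this from the Third and Fourth Isomorphism Theorems. The given congruence $\theta$ plays the role of the \emph{smaller} relation, so to avoid a notation clash with the statement of Theorem~\ref{thm:third iso}, I would keep $\theta$ fixed and write $\bowtie_\DD$ and $\bowtie_{\DD/\theta}$ for the two maximum congruences in play.

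First I would observe that $\theta\subseteq{\bowtie_\DD}$: by Proposition~\ref{prop:bowtie} the relation $\bowtie_\DD$ is the union of all congruences on $\DD$, and hence contains $\theta$. Thus $\theta\in[\theta,\bowtie_\DD]$ and $\bowtie_\DD$ is the top element of this interval.

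The key step is to identify ${\bowtie_\DD}/\theta$ with $\bowtie_{\DD/\theta}$. For this I would invoke the Fourth Isomorphism Theorem (Theorem~\ref{thm:fourth iso}): the map $\alpha\colon[\theta,\bowtie_\DD]\to\Con(\DD/\theta)$ defined by $\alpha(\phi)=\phi/\theta$ is a lattice isomorphism. A lattice isomorphism carries the top element of its domain to the top element of its codomain, so $\alpha(\bowtie_\DD)={\bowtie_\DD}/\theta$ must be the maximum element of $\Con(\DD/\theta)$, which is exactly $\bowtie_{\DD/\theta}$.

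Finally I would apply the Third Isomorphism Theorem (Theorem~\ref{thm:third iso}) with the smaller congruence equal to $\theta$ and the larger congruence equal to $\bowtie_\DD$, which is legitimate because $\theta\subseteq{\bowtie_\DD}$. This yields
\[
\l{\DD/\theta}\big/\l{{\bowtie_\DD}/\theta}\cong\DD/{\bowtie_\DD}.
\]
Substituting ${\bowtie_\DD}/\theta=\bowtie_{\DD/\theta}$ from the previous step turns the left-hand side into $\l{\DD/\theta}/{\bowtie_{\DD/\theta}}$, the minimum quotient of $\DD/\theta$, while the right-hand side is the minimum quotient of $\DD$, giving the desired isomorphism. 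The main obstacle is the middle step, the equality ${\bowtie_\DD}/\theta=\bowtie_{\DD/\theta}$: a priori there is no reason the image of the maximum congruence on $\DD$ should be the maximum congruence on the quotient, and it is precisely here that the full strength of the Fourth Isomorphism Theorem (that $\alpha$ is an isomorphism of lattices, not merely order preserving) is needed.
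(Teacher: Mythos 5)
Your proposal is correct and follows the same core route as the paper: apply the Third Isomorphism Theorem to the pair $\theta\subseteq{\bowtie_\DD}$ to obtain $\DD/{\bowtie_\DD}\cong(\DD/\theta)/({\bowtie_\DD}/\theta)$. In fact, the paper's entire proof is that one line, and it tacitly treats $(\DD/\theta)/({\bowtie_\DD}/\theta)$ as \emph{the} minimum quotient of $\DD/\theta$; the identification ${\bowtie_\DD}/\theta={\bowtie_{\DD/\theta}}$ is left implicit. You supply exactly this missing step via the Fourth Isomorphism Theorem, so your write-up is more complete than the published one. One minor quibble: you claim the full strength of the lattice isomorphism is needed, but surjectivity of $\alpha$ together with the evident monotonicity of $\phi\mapsto\phi/\theta$ already suffices --- every $\psi\in\Con(\DD/\theta)$ has the form $\phi/\theta$ with $\phi\subseteq{\bowtie_\DD}$, whence $\psi\subseteq{\bowtie_\DD}/\theta$, so ${\bowtie_\DD}/\theta$ is the maximum of $\Con(\DD/\theta)$. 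That is a matter of economy, not correctness.
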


\begin{proof}
	The Third Isomorphism Theorem implies that
	$\mathsf{D}/{\bowtie}\cong(\mathsf{D}/{\theta})/({\bowtie}/{\theta})$ since ${\theta}\subseteq{\bowtie}$.
\end{proof}

\begin{corollary}\label{cor:iso quotients, iso min quotients}\rm 
	If some quotients of $\mathsf{D}$ and $\mathsf{S}$ are isomorphic, then their minimum quotients $\mathsf{D}/{\bowtie}$ and $\SS/{\bowtie}$ are also isomorphic.
\end{corollary}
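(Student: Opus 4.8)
The plan is to reduce everything to Proposition~\ref{prop:min quot of a quot}, which already tells us that a quotient and its parent have isomorphic minimum quotients. First I would unpack the hypothesis. Saying that \emph{some} quotients of $\mathsf{D}$ and $\mathsf{S}$ are isomorphic means precisely that there exist congruence relations $\theta\in\Con(\mathsf{D})$ and $\sigma\in\Con(\mathsf{S})$ together with an isomorphism $\mathsf{D}/{\theta}\cong\mathsf{S}/{\sigma}$ in {\bf OGph}. The point to keep in mind is that the given isomorphism is between the quotients, not between $\mathsf{D}$ and $\mathsf{S}$ themselves, so the argument must pass through these intermediate objects.

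Next I would apply Proposition~\ref{prop:min quot of a quot} twice, once on each side: to $\theta$ it gives $\mathsf{D}/{\bowtie}\cong(\mathsf{D}/{\theta})/{\bowtie}$, and to $\sigma$ it gives $\mathsf{S}/{\bowtie}\cong(\mathsf{S}/{\sigma})/{\bowtie}$. It then remains to supply the middle link, namely that $(\mathsf{D}/{\theta})/{\bowtie}\cong(\mathsf{S}/{\sigma})/{\bowtie}$, after which chaining the three isomorphisms
\[
\mathsf{D}/{\bowtie}\cong(\mathsf{D}/{\theta})/{\bowtie}\cong(\mathsf{S}/{\sigma})/{\bowtie}\cong\mathsf{S}/{\bowtie}
\]
closes the proof.

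The only genuinely new observation is that minimum quotients are invariants of isomorphism: if $X\cong Y$ in {\bf OGph}, then $X/{\bowtie}\cong Y/{\bowtie}$, which I would apply to $X=\mathsf{D}/{\theta}$ and $Y=\mathsf{S}/{\sigma}$. I would justify this in one of two ways. The quick route uses the uniqueness of the simple quotient established just above: given an isomorphism $\varphi\colon X\to Y$, the composite of $\varphi$ with the canonical quotient map $Y\to Y/{\bowtie_Y}$ is an option-preserving surjection onto the simple optiongraph $Y/{\bowtie_Y}$, exhibiting $Y/{\bowtie_Y}$ as a simple quotient of $X$; since $X$ has only the one simple quotient $X/{\bowtie_X}$, we get $X/{\bowtie_X}\cong Y/{\bowtie_Y}$. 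Alternatively, one notes that $\varphi$ induces an inclusion-preserving bijection $\Con(X)\to\Con(Y)$, $\rho\mapsto\{(\varphi(p),\varphi(q))\mid (p,q)\in\rho\}$, which must carry the top element $\bowtie_X$ to the top element $\bowtie_Y$; the composite surjection above then has kernel exactly $\bowtie_X$, and the First Isomorphism Theorem identifies $X/{\bowtie_X}$ with $Y/{\bowtie_Y}$.

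I expect no real obstacle here: the entire content is the bookkeeping of recognizing the hypothesis as an isomorphism of quotients and invoking Proposition~\ref{prop:min quot of a quot} on each side, with the brief invariance remark above as the only supporting fact. The mild subtlety worth stating explicitly is this isomorphism-invariance of the minimum quotient, since it is what lets the given isomorphism between $\mathsf{D}/{\theta}$ and $\mathsf{S}/{\sigma}$ descend to their minimum quotients.
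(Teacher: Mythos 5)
Your proposal is correct and matches the paper's intended argument: the corollary is stated without proof precisely because it follows by applying Proposition~\ref{prop:min quot of a quot} to both $\mathsf{D}$ and $\mathsf{S}$ and chaining the resulting isomorphisms, exactly as you do. Your explicit justification that isomorphic optiongraphs have isomorphic minimum quotients (via the unique simple quotient, or via the induced lattice isomorphism on congruences) is the routine step the paper leaves implicit, and both of your routes for it are sound.
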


A rulegraph is simple if and only if the option map is injective~\cite[Proposition~7.3]{Basic2024}. This is not true for optiongraphs. In fact, in a directed cycle, no two positions have the same option set but the digraph is not simple as the next result shows.

\begin{proposition} \rm
	\label{prop:smash cannot reach terminal}
	If $p,q\in I$, then $p\bowtie q$.
\end{proposition}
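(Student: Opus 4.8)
The plan is to exhibit one concrete congruence relation that already identifies all of $I$, and then to invoke the maximality of $\bowtie$. Concretely, I would define the equivalence relation $\theta$ on $\DD$ whose only nonsingleton class is $I$ itself; that is, the classes of $\theta$ are $I$ together with the singletons $\{p\}$ for each $p\in\DD\setminus I$. This is visibly an equivalence relation, so essentially all of the work lies in checking the congruence condition, after which the conclusion is immediate.

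The key observation I would record first is that every position of $I$ is non-terminal. Indeed, if some $p\in I$ had $\Opt(p)=\emptyset$, then $p$ would be a terminal subposition of itself, contradicting the defining property of $I$. Hence $\Opt(p)\neq\emptyset$ for all $p\in I$. Combining this with the containment $\bigcup\Opt(I)\subseteq I$ noted in the Finite-Infinite-Mixed discussion, we conclude that for each $p\in I$ the option set $\Opt(p)$ is a \emph{nonempty} subset of the single class $I$.

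To verify that $\theta$ is a congruence, suppose $p\mathrel{\theta}q$. If $p=q$ there is nothing to check, so assume $p\neq q$, which forces $p,q\in I$. By the previous paragraph, both $\Opt(p)$ and $\Opt(q)$ are nonempty subsets of $I$, and every element of $I$ has the same $\theta$-class, namely $I$ itself. Therefore
\[
[\Opt(p)]_\theta=\{I\}=[\Opt(q)]_\theta,
\]
which is exactly the congruence condition. Thus $\theta\in\Con(\DD)$. Since $\bowtie=\bigcup\Con(\DD)$ is the maximum congruence relation by Proposition~\ref{prop:bowtie}, we have $\theta\subseteq{\bowtie}$; and for any $p,q\in I$ we have $(p,q)\in\theta$, hence $p\bowtie q$.

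The one subtle point, and the step I would flag as the real content, is the edge case of empty option sets: if terminal positions could lie in $I$, then $[\Opt(p)]_\theta$ could equal $\emptyset$ for a terminal $p$ while equaling $\{I\}$ for a nonterminal one, and $\theta$ would fail to be a congruence. This is precisely what the non-terminality of $I$-positions rules out. Everything else is a routine check, and no case analysis beyond $p=q$ versus $p,q\in I$ is needed.
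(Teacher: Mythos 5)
Your proof is correct and follows the same route as the paper: both exhibit the equivalence relation whose only nontrivial class is $I$, verify it is a congruence via the observation that $\emptyset\ne\Opt(r)\subseteq I$ for each $r\in I$ (so $[\Opt(r)]_\theta=\{I\}$), and then conclude by maximality of $\bowtie$. The "subtle point" you flag about non-terminality is exactly the $\emptyset\ne\Opt(r)$ step in the paper's proof, so your write-up is simply a more detailed version of the same argument.
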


\begin{proof}
	Let $\theta$ be the relation that identifies the positions in $I$. If $r\in I$, then $\emptyset\ne\Opt(r)\subseteq I$, and hence $[\Opt(r)]=\{I\}$. Thus $\theta$ is a congruence relation, and so $(p,q)\in\theta\subseteq{\bowtie}$.
\end{proof}

\begin{proposition}
\label{prop:Iclass}
If $p\in I$, then $[p]_{\bowtie}=I$.
\end{proposition}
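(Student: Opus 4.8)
The plan is to prove the two inclusions $I\subseteq [p]_{\bowtie}$ and $[p]_{\bowtie}\subseteq I$ separately. The first is immediate: by Proposition~\ref{prop:smash cannot reach terminal}, any two positions of $I$ are $\bowtie$-related, so $I$ is contained in a single $\bowtie$-class, and since $p\in I$ this class is exactly $[p]_{\bowtie}$; hence $I\subseteq[p]_{\bowtie}$. All of the content is therefore in the reverse inclusion $[p]_{\bowtie}\subseteq I$, that is, in showing that every position congruent to $p$ lies in $I$.

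The engine for the reverse inclusion is a single structural observation about $\bowtie$: every position $s$ with $s\bowtie p$ is non-terminal and has all of its options again $\bowtie$-congruent to $p$. To obtain this, first note that $p\in I$ forces $\Opt(p)\neq\emptyset$ (a terminal position would be its own terminal subposition), and that $\bigcup\Opt(I)\subseteq I$ gives $\Opt(p)\subseteq I$. By Proposition~\ref{prop:smash cannot reach terminal} the whole of $I$ sits inside $[p]_{\bowtie}$, so $[\Opt(p)]_{\bowtie}=\{[p]_{\bowtie}\}$. Now if $s\bowtie p$, then since $\bowtie$ is a congruence relation $[\Opt(s)]_{\bowtie}=[\Opt(p)]_{\bowtie}=\{[p]_{\bowtie}\}$. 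The right-hand side is nonempty, which forces $\Opt(s)\neq\emptyset$, and it is the singleton $\{[p]_{\bowtie}\}$, which forces every option of $s$ to be $\bowtie$-congruent to $p$. This establishes the observation.

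With this in hand I would finish by an induction on walk length. Fix $q\in[p]_{\bowtie}$ and let $r$ be any subposition of $q$, reached by a finite walk $q=r_0,r_1,\ldots,r_n=r$. Starting from $r_0=q\bowtie p$ and applying the observation step by step along the walk shows $r_i\bowtie p$ for every $i$, so in particular $r\bowtie p$ and hence $r$ is non-terminal. Thus no subposition of $q$ is terminal, which is precisely the statement that $q\in I$. Combining the two inclusions yields $[p]_{\bowtie}=I$.

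I expect the main obstacle to be organizational rather than deep: one must be careful that $p\in I$ really does guarantee $\Opt(p)\neq\emptyset$, and that the singleton computation $[\Opt(s)]_{\bowtie}=\{[p]_{\bowtie}\}$ is read correctly as delivering \emph{both} non-terminality of $s$ and closure of its options inside $[p]_{\bowtie}$. An alternative, slightly slicker route avoids the induction altogether: apply the proposition stating that an option-preserving surjection sends $F$, $I$, $M$ onto $F$, $I$, $M$ to the canonical surjection $\DD\to\DD/{\bowtie}$, and use that $F$, $I$, $M$ are disjoint in $\DD/{\bowtie}$ to rule out any $q\in F\cup M$ being identified with $p\in I$.
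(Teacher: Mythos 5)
Your proof is correct. It runs on the same engine as the paper's: both arguments hinge on the identity $\Opt_{\bowtie}([p]_{\bowtie})=\{[p]_{\bowtie}\}$, which guarantees that a walk starting in $[p]_{\bowtie}$ can never leave it and hence can never reach a terminal position. The genuine difference is how this identity is obtained. The paper applies the Second Isomorphism Theorem to the suboptiongraph $\mathsf{I}$ and $\bowtie$, identifying the image $\tilde{\mathsf{I}}$ in $\DD/{\bowtie}$ with the one-vertex loop $\mathsf{I}/{\bowtie_{|\mathsf{I}}}$; you instead get it elementarily from $\emptyset\ne\Opt(p)\subseteq I\subseteq[p]_{\bowtie}$ together with the defining property of a congruence, which transfers $[\Opt(p)]_{\bowtie}=\{[p]_{\bowtie}\}$ to $[\Opt(s)]_{\bowtie}$ for every $s\mathrel{\bowtie}p$. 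The endgames differ only in organization: the paper assumes $q\in[p]_{\bowtie}\setminus I$ and derives a contradiction at the first arrow where a walk from $q$ to a terminal position exits $[q]_{\bowtie}$, while you induct along an arbitrary walk to show that every subposition of $q\in[p]_{\bowtie}$ stays $\bowtie$-congruent to $p$ and is therefore non-terminal, so $q\in I$ by definition. Your route buys self-containedness, needing only Proposition~\ref{prop:smash cannot reach terminal} and the definition of congruence, whereas the paper's buys thematic value, since that section is precisely about putting the isomorphism theorems to work. Your sketched alternative, pushing $p$ and a hypothetical $q\in[p]_{\bowtie}\cap(F\cup M)$ through the canonical surjection $\DD\to\DD/{\bowtie}$ and invoking the proposition that option-preserving surjections carry $F$, $I$, $M$ onto $F$, $I$, $M$, is also valid and shorter still, though it leans on a proposition whose proof the paper only sketches.
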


\begin{proof}
The Second Isomorphism Theorem applied to $\mathsf{I}$ and $\bowtie$ implies that $\tilde{\mathsf{I}}$ is isomorphic to $\mathsf{I}/{\bowtie_{|\mathsf{I}}}$, which is a single position with a loop. So $\Opt_{\bowtie}([p]_{\bowtie})=\{[p]_{\bowtie}\}$.
If $t$ is a terminal position, then $\Opt_{\bowtie}([t]_{\bowtie})=[\Opt(t)]_{\bowtie}=[\emptyset]_{\bowtie}=\emptyset$. Hence $t\notin [p]_{\bowtie}$.

We know that $I\subseteq [p]_{\bowtie}$ by Proposition~\ref{prop:smash cannot reach terminal}. 
Suppose $q\in [p]_{\bowtie}\setminus I$. Then there is a finite walk from $q$ to a terminal position $t$. Along this walk we can find two positions $r\in [q]_{\bowtie}$ and $s\in\Opt(r)\setminus [q]_{\bowtie}$. This gives the contradiction
\[
[p]_{\bowtie}=[q]_{\bowtie}\ne[s]_{\bowtie}\in[\Opt(r)]_{\bowtie}=\Opt_{\bowtie}([r]_{\bowtie})=\Opt_{\bowtie}([p]_{\bowtie})=\{[p]_{\bowtie}\}.
\qedhere
\]
\end{proof}

The following easy consequence is a stronger form of an observation of~\cite[p. 181]{Li}. 

\begin{corollary}\label{cor:black hole}\rm 
	An optiongraph $\mathsf{D}$ has no terminal position if and only if $\mathsf{D}/{\bowtie}$ is a one-vertex loop.
\end{corollary}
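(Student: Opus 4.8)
The plan is to prove the two implications separately, and to observe first that the hypothesis ``$\DD$ has no terminal position'' is exactly the statement $I = \DD$. Indeed, each position is a length-zero subposition of itself, so a terminal position is its own terminal subposition and therefore lies outside $I$; conversely, if $\DD$ has no terminal position at all, then no position has a terminal subposition, so every position lies in $I$. Thus the corollary reduces to a statement about the behavior of $\bowtie$ when $I = \DD$, which is precisely the regime analyzed in Proposition~\ref{prop:smash cannot reach terminal} and Proposition~\ref{prop:Iclass}.

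For the forward direction, I would assume $\DD$ has no terminal position, so $I = \DD$. Proposition~\ref{prop:smash cannot reach terminal} then gives $p \bowtie q$ for every pair $p,q \in \DD$, so $\bowtie$ identifies all positions and $\DD/{\bowtie}$ has a single vertex $v := [p]_{\bowtie}$. To see that $v$ carries a loop rather than being terminal, I would use that $\Opt(p) \ne \emptyset$ for every $p$ (no terminal positions); since the canonical quotient map is option preserving, $\Opt_{\bowtie}(v) = [\Opt(p)]_{\bowtie}$ is nonempty, and as $v$ is the only vertex this forces $\Opt_{\bowtie}(v) = \{v\}$. Hence $\DD/{\bowtie}$ is a one-vertex loop.

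For the reverse direction, I would assume $\DD/{\bowtie}$ is a one-vertex loop, with unique vertex $v$ satisfying $\Opt_{\bowtie}(v) = \{v\}$, and argue by contradiction. If $t$ were a terminal position, then option preservation of the canonical quotient map $p \mapsto [p]_{\bowtie}$ would give $\Opt_{\bowtie}([t]_{\bowtie}) = [\Opt(t)]_{\bowtie} = [\emptyset]_{\bowtie} = \emptyset$. But $[t]_{\bowtie} = v$ is the only vertex, so $\Opt_{\bowtie}(v) = \{v\} \ne \emptyset$, a contradiction. Therefore $\DD$ has no terminal position.

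I do not expect a genuine obstacle here, since the result is billed as an easy consequence of the preceding analysis; the only points requiring care are the clean identification of ``no terminal position'' with $I = \DD$ (so that Proposition~\ref{prop:smash cannot reach terminal} applies verbatim) and the verification that the surviving single vertex is a loop rather than terminal, which in both directions amounts to tracking emptiness of option sets through the option-preserving quotient map.
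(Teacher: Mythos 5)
Your proof is correct and follows exactly the route the paper intends: it identifies ``no terminal position'' with $I=\DD$, applies Proposition~\ref{prop:smash cannot reach terminal} (equivalently Proposition~\ref{prop:Iclass}) to collapse everything to one class with a loop, and rules out terminal positions in the converse by noting they would force an empty option set in the quotient. The paper leaves this ``easy consequence'' unproved, and your argument supplies precisely the omitted details.
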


It is clear that positions in $I$ have infinite nim-values. The converse is false as seen in Example \ref{ex:extendednims}. 

\begin{question}\rm 
	What families of known games are simple? For example, we conjecture that Fair Shares and Varied Pairs~\cite{ww2} is simple with any number of almonds.
\end{question}

\subsection{Sums}
\label{sec:sums}

The \emph{sum} $\CC+\DD$ of two optiongraphs $\CC$ and $\DD$ is the digraph box product with $\Opt_{\CC+\DD}(p,q)=(\Opt_\CC(p)\times\{q\})\cup(\{p\}\times\Opt_\DD(q))$. This construction matches the classical game sum. 

\begin{proposition} 
	\label{prop:product map}
	If $f:\mathsf{C}\to \tilde{\mathsf{C}}$ and $g:\mathsf{D}\to \tilde{\mathsf{D}}$ are option preserving, then $f\times g:\mathsf{C}+\mathsf{D}\to\tilde{\mathsf{C}}+\tilde{\mathsf{D}}$ defined by $(f\times g)(c,d):=(f(c), g(d))$  is also option preserving.
\end{proposition}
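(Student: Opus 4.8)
The plan is to verify the defining identity for option-preserving maps, namely $\Opt_{\tilde{\CC}+\tilde{\DD}}\bigl((f\times g)(c,d)\bigr)=(f\times g)\bigl(\Opt_{\CC+\DD}(c,d)\bigr)$, directly for an arbitrary position $(c,d)\in\CC+\DD$. The strategy is simply to expand each side using the definition of the sum's option function together with the option-preserving hypotheses on $f$ and $g$, and then observe that the two expansions coincide.

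First I would expand the left-hand side. By the definitions of $f\times g$ and of the sum,
\[
\Opt_{\tilde{\CC}+\tilde{\DD}}(f(c),g(d)) = \bigl(\Opt_{\tilde{\CC}}(f(c))\times\{g(d)\}\bigr)\cup\bigl(\{f(c)\}\times\Opt_{\tilde{\DD}}(g(d))\bigr).
\]
Since $f$ and $g$ are option preserving, I substitute $\Opt_{\tilde{\CC}}(f(c))=f(\Opt_\CC(c))$ and $\Opt_{\tilde{\DD}}(g(d))=g(\Opt_\DD(d))$ to rewrite this as
\[
\bigl(f(\Opt_\CC(c))\times\{g(d)\}\bigr)\cup\bigl(\{f(c)\}\times g(\Opt_\DD(d))\bigr).
\]
Next I would expand the right-hand side. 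Applying the map $f\times g$ to the set $\Opt_{\CC+\DD}(c,d)=(\Opt_\CC(c)\times\{d\})\cup(\{c\}\times\Opt_\DD(d))$ and using that the image of a set commutes with unions gives $(f\times g)(\Opt_\CC(c)\times\{d\})\cup(f\times g)(\{c\}\times\Opt_\DD(d))$. The key elementary observation is that the image of a slice factors, i.e.\ $(f\times g)(S\times\{d\})=f(S)\times\{g(d)\}$ and $(f\times g)(\{c\}\times T)=\{f(c)\}\times g(T)$; applying this to each term produces exactly the same union as on the left-hand side.

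The main obstacle, such as it is, is purely bookkeeping: one must be careful that the image of a union of Cartesian products decomposes as the union of the images, and that applying $f\times g$ componentwise to a slice $S\times\{d\}$ genuinely factors as $f(S)\times\{g(d)\}$. These are standard facts about images of sets under a product map and carry no genuine difficulty, so the proposition follows immediately once both sides have been expanded and matched.
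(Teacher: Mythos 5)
Your proof is correct and matches the paper's argument in substance: both expand the option function of the sum, use that the image under $f\times g$ distributes over the union and factors on each slice, and invoke the option-preserving hypotheses on $f$ and $g$. The only cosmetic difference is that the paper writes this as a single chain of equalities starting from $(f\times g)(\Opt(r,s))$, whereas you expand both sides and meet in the middle.
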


\begin{proof}
	The computation
	\begin{align*}
		(f\times g)(\Opt(r,s))
		&= (f\times g)((\Opt(r)\times\{s\}) \cup 
		(\{r\}\times\Opt(s))) \\
		&= (f(\Opt(r))\times\{ g(s)\}) \cup 
		(\{f(r)\}\times g(\Opt(s))) \\                       
		&= (\Opt(f(r))\times\{ g(s)\}) \cup  
		(\{f(r)\}\times\Opt( g(s)))\\ 
		&= \Opt(f(r), g(s)) 
		= \Opt((f\times g)(r,s))
	\end{align*}
	verifies the result. 
\end{proof}

\goodbreak

The following is an immediate consequence of Proposition~\ref{prop:product map} after applying the First Isomorphism Theorem. 

\begin{corollary}\label{cor: quotient of a sum}\rm 
	If $\phi$ and $\psi$ are congruence relations on the optiongraphs $\mathsf{C}$ and $\mathsf{D}$, respectively, then 
	\[
	\theta:=\{((c,d),(c',d')) \mid c\mathrel{\phi} c' \text{ and } d \mathrel{\psi}d'\}
	\]
	is a congruence relation on $\mathsf{C}+\mathsf{D}$ and $(\mathsf{C}+\mathsf{D})/\theta$ is isomorphic to $\mathsf{C}/{\phi} +\mathsf{D}/{\psi}$.
\end{corollary}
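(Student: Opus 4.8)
The plan is to invoke Proposition~\ref{prop:product map} together with the First Isomorphism Theorem to avoid any direct verification that $\theta$ is a congruence relation. First I would observe that $\phi$ and $\psi$, being congruence relations, are precisely the kernels of their canonical quotient maps $f\colon\CC\to\CC/\phi$ and $g\colon\DD\to\DD/\psi$. By Proposition~\ref{prop:product map}, the product map $f\times g\colon\CC+\DD\to\CC/\phi+\DD/\psi$ is option preserving, and since $f$ and $g$ are surjective, so is $f\times g$. The First Isomorphism Theorem then applies directly to $f\times g$.

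Next I would identify the kernel of $f\times g$ with the stated relation $\theta$. Unwinding definitions, $(f\times g)(c,d)=(f\times g)(c',d')$ means $f(c)=f(c')$ and $g(d)=g(d')$, which is exactly $c\mathrel{\phi}c'$ and $d\mathrel{\psi}d'$; hence $\ker(f\times g)=\theta$. By Theorem~\ref{thm:first iso}, $\theta\in\Con(\CC+\DD)$, establishing the first claim of the corollary for free. The same theorem gives that $(\CC+\DD)/\theta=(\CC+\DD)/\ker(f\times g)$ is isomorphic to the image $(f\times g)(\CC+\DD)$.

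Finally I would check that the image is all of $\CC/\phi+\DD/\psi$. Because $f$ and $g$ are each surjective, every pair $([c]_\phi,[d]_\psi)$ is hit by $(f\times g)(c,d)$, so the image is the full sum optiongraph $\CC/\phi+\DD/\psi$. Combining this with the isomorphism from the previous step yields $(\CC+\DD)/\theta\cong\CC/\phi+\DD/\psi$, as desired. I do not expect any genuine obstacle here: the statement is explicitly flagged as an immediate consequence, and the only points requiring care are the bookkeeping identification $\ker(f\times g)=\theta$ and the surjectivity of $f\times g$, both of which are routine. The one thing worth stating cleanly is that the canonical quotient maps are the right choice of $f$ and $g$, so that their kernels recover $\phi$ and $\psi$ exactly.
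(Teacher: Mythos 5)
Your proof is correct and is exactly the argument the paper intends: the paper dispatches this corollary in one line as ``an immediate consequence of Proposition~\ref{prop:product map} after applying the First Isomorphism Theorem,'' and your write-up simply fills in the routine details (identifying $\ker(f\times g)$ with $\theta$ and checking surjectivity of $f\times g$). Nothing is missing and nothing differs in substance.
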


It is natural to wonder whether sums are compatible with minimum quotients. The next example illustrates that these two operations do not commute. It also demonstrates that the sum of simple games is usually not simple. 

\begin{example}
	\label{ex:compatibility of sum and quotient}
	\rm
	Let $\mathsf{D}$ be the optiongraph given in Figure~\ref{fig:compatibility of sum and quotient}. It is clear that $\mathsf{D}$ is simple, and so $\mathsf{D}/{\bowtie}+\mathsf{D}/{\bowtie}\cong \mathsf{D}+\mathsf{D}$. However, $(\mathsf{D}+\mathsf{D})/{\bowtie}$ results in the optiongraph also given in Figure~\ref{fig:compatibility of sum and quotient}. We see that $\mathsf{D}/{\bowtie}+\mathsf{D}/{\bowtie}\not\cong (\mathsf{D}+\mathsf{D})/{\bowtie}$. 
	Yet, $(\mathsf{D}+\mathsf{D})/{\bowtie}\cong(\mathsf{D}/{\bowtie+\mathsf{D}}/{\bowtie})/{\bowtie}$, which the following result generalizes for any pair of optiongraphs.
\end{example}

\begin{figure}[h]
	\hfil
	\begin{tikzpicture}[scale=.42]
		\node (1) [tiny vert] at (0,1) {};
		\node (2) [tiny vert] at (0,0) {};
		\path [d] (1) to (2);
		\node at (0,-1) {$\DD$};
	\end{tikzpicture}
	\hfil
	\begin{tikzpicture}[scale=.42]
		\node (02) [tiny vert] at (0.5,2) {};
		\node (01) [tiny vert] at (0,1) {};
		\node (11) [tiny vert] at (1,1) {};
		\node (00) [tiny vert] at (.5,0) {};
		\path [d] (02) to (01);
		\path [d] (02) to (11);
		\path [d] (01) to (00);
		\path [d] (11) to (00);
		\node at (.5,-1) {$\mathsf{D}/{\bowtie}+\mathsf{D}/{\bowtie}\cong \mathsf{D}+\mathsf{D}$};
	\end{tikzpicture}
	\hfil
	\begin{tikzpicture}[scale=.42]
		\node (02) [tiny vert] at (0,2) {};
		\node (01) [tiny vert] at (0,1) {};
		\node (00) [tiny vert] at (0,0) {};
		\path [d] (02) to (01);
		\path [d] (01) to (00);
		\node at (0,-1) {$(\mathsf{D}+\mathsf{D})/{\bowtie}$};
	\end{tikzpicture}
	\caption{
		\label{fig:compatibility of sum and quotient}
		An example showing that the sum and minimum quotient operations do not commute.
	}
\end{figure}
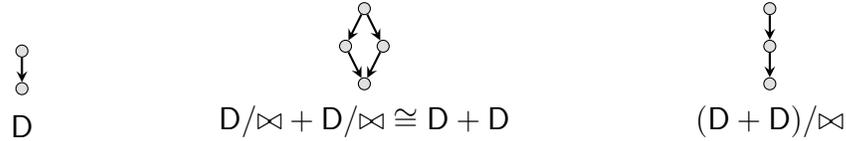

The next result is the best we can hope for in terms of compatibility between optiongraph sums and minimum quotients.

\begin{corollary}\rm 
	If $\mathsf{C}$ and $\mathsf{D}$ are optiongraphs, then $(\mathsf{C}+\mathsf{D})/{\bowtie}\cong(\mathsf{C}/{\bowtie+\mathsf{D}}/{\bowtie})/{\bowtie}$.
\end{corollary}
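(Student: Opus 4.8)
The plan is to reduce the statement to the two tools already established for sums and minimum quotients, namely Corollary~\ref{cor: quotient of a sum} and Proposition~\ref{prop:min quot of a quot}. First I would set $\phi:={\bowtie}_\CC$ and $\psi:={\bowtie}_\DD$ and form the product congruence
\[
\theta:=\{((c,d),(c',d'))\mid c\mathrel{{\bowtie}_\CC}c'\text{ and }d\mathrel{{\bowtie}_\DD}d'\}
\]
on $\CC+\DD$. By Corollary~\ref{cor: quotient of a sum}, $\theta$ is a congruence relation on $\CC+\DD$ and $(\CC+\DD)/\theta\cong\CC/{\bowtie}+\DD/{\bowtie}$. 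This is the only substantive input, and it is already proved via Proposition~\ref{prop:product map} and the First Isomorphism Theorem.

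The second step feeds this isomorphism into the machinery for minimum quotients. Since $\theta\in\Con(\CC+\DD)$, Proposition~\ref{prop:min quot of a quot} gives that the minimum quotient of $\CC+\DD$ is isomorphic to the minimum quotient of $(\CC+\DD)/\theta$. Combining this with $(\CC+\DD)/\theta\cong\CC/{\bowtie}+\DD/{\bowtie}$ from the first step, the minimum quotient of $\CC+\DD$ is isomorphic to the minimum quotient of $\CC/{\bowtie}+\DD/{\bowtie}$, which is precisely $(\CC/{\bowtie+\DD}/{\bowtie})/{\bowtie}$. Equivalently, one can note that $(\CC+\DD)/\theta$ is at once a quotient of $\CC+\DD$ and, up to isomorphism, a quotient of $\CC/{\bowtie}+\DD/{\bowtie}$, so Corollary~\ref{cor:iso quotients, iso min quotients} applies verbatim to yield the claim.

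I do not expect a genuine obstacle here; the content lives entirely in the already-established Corollary~\ref{cor: quotient of a sum}, and the rest is an application of Proposition~\ref{prop:min quot of a quot}. The only point requiring care is bookkeeping with the three distinct instances of $\bowtie$ — the maximum congruences of $\CC$, of $\DD$, and of the outer sum — so that the outer $\bowtie$ in $(\CC/{\bowtie+\DD}/{\bowtie})/{\bowtie}$ is read as ${\bowtie}_{\CC/{\bowtie}+\DD/{\bowtie}}$ rather than conflated with the inner two. The conceptual subtlety worth flagging is that Example~\ref{ex:compatibility of sum and quotient} shows sum and minimum quotient genuinely fail to commute; the reason this corollary still holds is that $\theta$ need not be the maximum congruence on $\CC+\DD$, and the extra outer minimum quotient performs exactly the further identifications beyond $\theta$. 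Proposition~\ref{prop:min quot of a quot} packages this collapse automatically, which is why no direct manipulation of $\bowtie_{\CC+\DD}$ is needed.
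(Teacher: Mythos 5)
Your proposal is correct and matches the paper's proof: both hinge on Corollary~\ref{cor: quotient of a sum} to exhibit $(\CC+\DD)/\theta\cong\CC/{\bowtie}+\DD/{\bowtie}$, and then invoke Corollary~\ref{cor:iso quotients, iso min quotients} (which is itself just Proposition~\ref{prop:min quot of a quot} packaged for two optiongraphs) to conclude the minimum quotients agree. Your alternative phrasing via Proposition~\ref{prop:min quot of a quot} is the same argument unpacked, so there is no substantive difference.
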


\begin{proof}
	Corollary~\ref{cor: quotient of a sum} shows that a quotient of $\mathsf{C}+\mathsf{D}$ is isomorphic to $\mathsf{C}/{\bowtie}+\mathsf{D}/{\bowtie}$. Therefore, $\mathsf{C}+\mathsf{D}$ and $\mathsf{C}/{\bowtie}+\mathsf{D}/{\bowtie}$ must have isomorphic minimum quotients by Corollary~\ref{cor:iso quotients, iso min quotients}.
\end{proof}

\noindent
\section*{Acknowledgment}
We thank Aaron Siegel for some valuable discussions, as well as the anonymous referees for several useful suggestions. Some of the computations were performed on Northern Arizona University’s Monsoon computing cluster, funded by Arizona’s Technology and Research Initiative Fund.

%%%%%%%%%%%%%%%%%%%%%%%%%%%%%%%%%%%%%%%%%%%%%%%%%%%%%%%%%%%%%%%%%%%%
\footnotesize
\bibliographystyle{abbrv}

\end{document}